\newtheorem{theorem}{Theorem}[section]
\theoremstyle{definition}
\theoremstyle{remark}
\numberwithin{equation}{section}
\newcommand{\cone}{\mbox{$\times \hspace*{-0.258cm} \times$}}
\begin{document}

\title[Higher dimensional Schwarz's surfaces  and Scherk's surfaces]
{Higher dimensional Schwarz's surfaces  and Scherk's surfaces}
\author[J. CHOE and J. Hoppe ]{JAIGYOUNG CHOE and JENS HOPPE}
\thanks{J.C. supported in part by NRF 2011-0030044, SRC-GAIA}

\address{Korea Institute for Advanced Study, Seoul, 02455, Korea}
\email{choe@kias.re.kr}

\address{KTH, 100 44 Stockholm,
Sweden\\}
\email{hoppe@kth.se}

\begin{abstract}
Higher dimensional generalizations of Schwarz's $P$-surface, Schwarz's $D$-surface and Scherk's second surface are constructed as complete embedded periodic minimal hypersurfaces in $\mathbb R^n$.
\end{abstract}

\maketitle

In $\mathbb R^3$ minimal surfaces are easy to construct. Thanks to the existence of isothermal coordinates on a surface, one can derive the Weierstrass representation formula, which allows one to obtain minimal surfaces in $\mathbb R^3$ at will. Nonetheless, only a few topologically simple complete minimal surfaces were known to exist in $\mathbb R^3$ until recently.

It is not easy to understand the topology of a minimal surface in terms of its Weierstrass data. It is ironical that many of these well-known simple minimal surfaces could be constructed without resorting to the Weierstrass representation formula. The catenoid, the helicoid, Enneper's surface, Scherk's first surface, Scherk's second surface, Schwarz's $P$-surface and Schwarz's $D$-surface can be constructed by exploiting their geometric characteristics.

In $\mathbb R^n,n\geq4$, there is no systematic method to construct minimal hypersurfaces. So far, only the catenoid \cite{B}, the helicoid \cite{CH} and Enneper's surface \cite{C} are known to have higher dimensional versions in $\mathbb R^n$. In this paper we construct the higher dimensional generalizations of Schwarz's $P$-surface, Schwarz's $D$-surface and Scherk's second surface. First, we extract geometric characteristics of their fundamental pieces, and then solve the Dirichlet problem to construct the higher dimensional versions of the fundamental pieces and extend them across their boundaries by $180^\circ$-rotation.

\section{Schwarz's $P$-surface}
A triply periodic minimal surface in $\mathbb R^3$ was first constructed by H.A. Schwarz \cite{S} in 1865. It was found as a by-product in the process of solving the Plateau problem in a concrete case. The Jordan curve that Schwarz considered was the skew quadrilateral $\Gamma$ consisting of the four edges of a regular tetrahedron $T$. He found the minimal surface $S_0$ spanning $\Gamma$ from explicit data for the Weierstrass representation formula. Since $T$ fits nicely in a cube $Q^3$ so that each edge of $\Gamma$ becomes a diagonal on the square faces of $Q^3$, Schwarz was able to show that the analytic extension $S$ of $S_0$ is an embedded triply periodic minimal surface in $\mathbb R^3$. He also proved that $S^*$, the conjugate minimal surface of $S$, is embedded and triply periodic as well. The quadrilateral $\Gamma^*$ bounding the fundamental piece of $S^*$ has vertex angles of $\pi/3,\pi/3,\pi/2,\pi/2$ while those of $\Gamma$ are $\pi/3,\pi/3,\pi/3,\pi/3$. Because of the vertex angles $\pi/2,\pi/2$ of $\Gamma^*$ $S^*$ turns out to be perpendicular to $\partial Q^3$. Moreover, due to the vertex angles $\pi/3,\pi/3$ of $\Gamma^*$ as well as $\Gamma$, both $S$ and $S^*$ contain three straight lines meeting at every flat point. In fact $S^*$ is the well-known Schwarz $P$-surface. Figure 1 shows a fundamental piece of $S^*$ in $[-3,1]\times[-3,1]\times[-3,1]$:
\begin{center}
\includegraphics[width=2.5in]{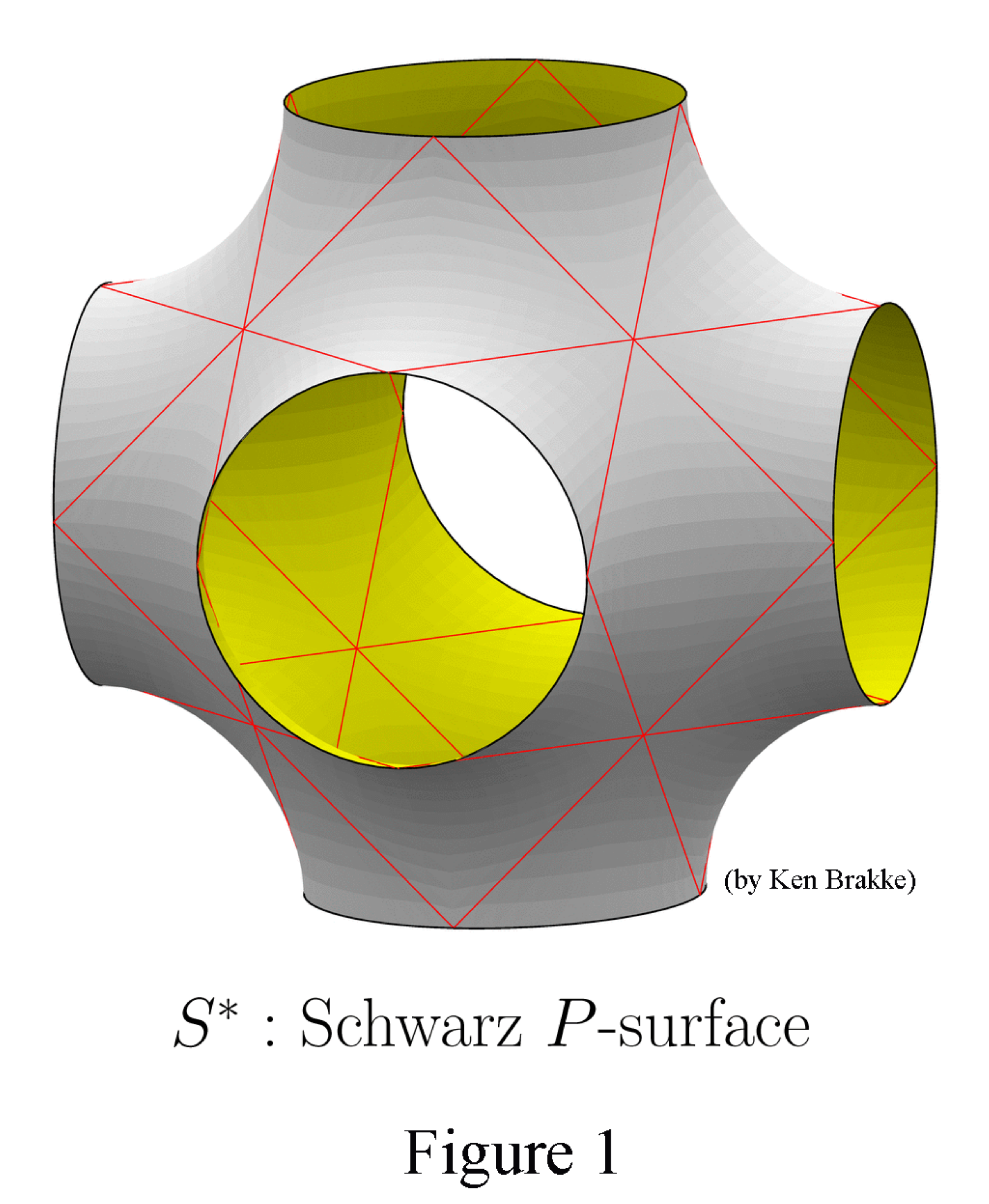}\\
\end{center}

Part of $S^*$ in a smaller cube $Q^3$ is shown in Figure 2. This part, denoted $H$, is diffeomorphic to a hexagon and consists of 6 congruent triangular pieces. Each triangular piece is bounded by two line segments and a planar curve. Along this curve the triangular piece is perpendicular to the face of the cube. $H$ is close to the regular hexagon $H_0$ such that $L:=H\cap H_0$ is three straight lines meeting each other at $60^\circ$. Let's introduce a coordinate system $(x_1,x_2,x_3)$ such that $$Q^3=[-1,1]\times[-1,1]\times[-1,1]\,\,\,\,{\rm and}\,\,\,\, H_0=\{(x_1,x_2,x_3)\in Q^3:x_1+x_2+x_3=0\}.$$ Then the three straight lines $L$ in $H$ are the intersection of $H_0$ with the three coordinate planes of $\mathbb R^3$ and furthermore
$$L=H_0\cap\{(x_1,x_2,x_3):x_1+x_2=0 \,\,\,\,{\rm or}\,\,\,\,x_2+x_3=0 \,\,\,\,{\rm or}\,\,\,\,x_1+x_3=0\}.$$
\begin{center}
\includegraphics[width=2.1in]{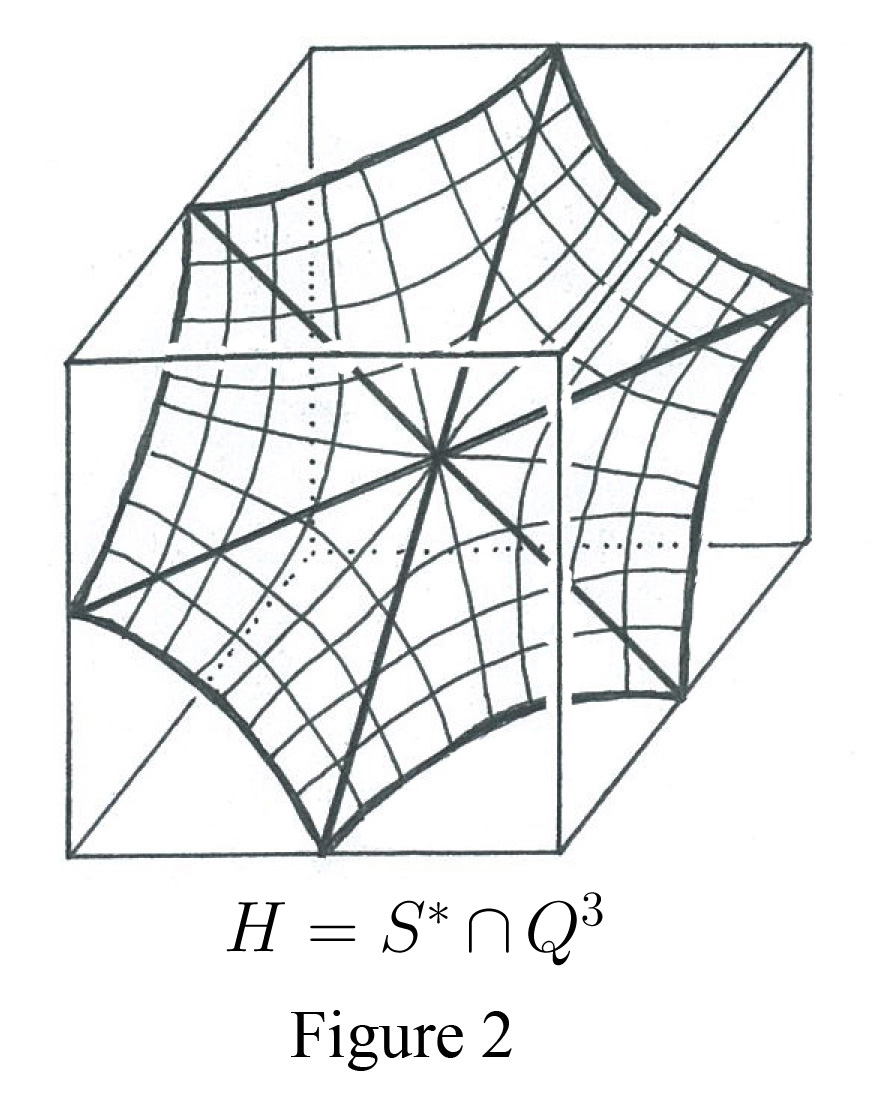}\\
\end{center}

We will generalize these properties of $S^*\cap Q^3$ to find a higher dimensional Schwarz surface in $\mathbb R^n$. First let $Q^n$ be the $n$-dimensional cube in $\mathbb R^n$
$$Q^n=[-1,1]^n=\{(x_1,\ldots,x_n):-1\leq x_i\leq1\}.$$
Define
$$P_n=\{(x_1,\ldots,x_n)\in Q^n:x_1+\cdots+x_n=0\}.$$
Then $P_n$ is an $(n-1)$-dimensional polyhedron with $2n$ faces, that is,
$$\partial P_n=\left(\bigcup_{i=1}^nB_i^+\right)\,\cup\,\left(\bigcup_{i=1}^nB_i^-\right),$$
where
$$B_i^+=\{(x_1,\ldots,x_n)\in\partial Q^n:x_i=1,x_1+\cdots+\widehat{x}_i+\cdots+x_n=-1\},$$
$$B_i^-=\{(x_1,\ldots,x_n)\in\partial Q^n:x_i=-1,x_1+\cdots+\widehat{x}_i+\cdots+x_n=1\}.$$
\begin{center}
\includegraphics[width=2.9in]{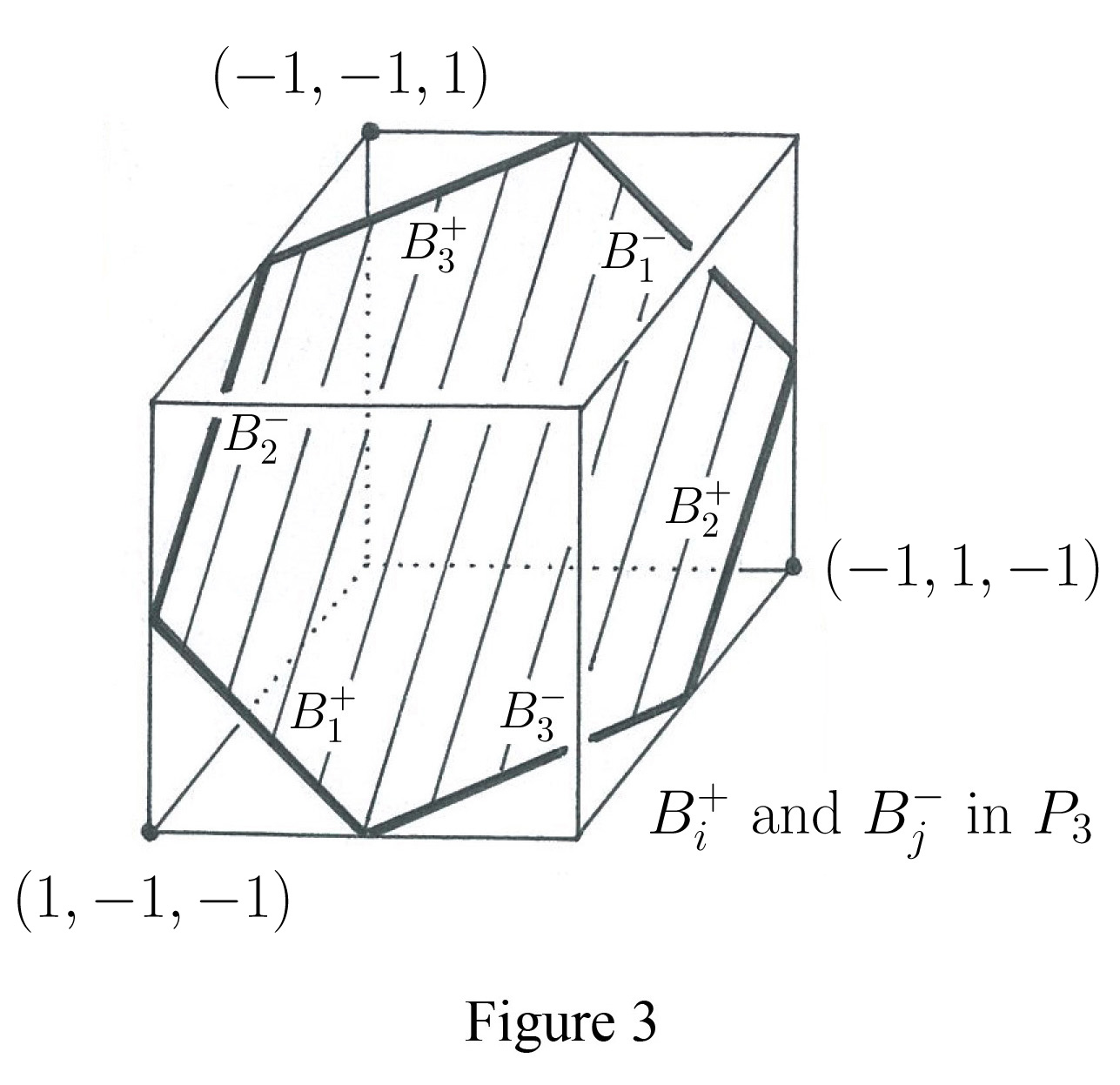}\\
\end{center}
$P_3$ is a regular hexagon and $P_4$ is a regular octahedron. What can one say about $P_n$? Let $G_1\subset O(n)$ be the group of all isometries of $\mathbb R^n$ which act on $\{x_1,\ldots,x_n\}$ as permutations and define $\varphi:\mathbb R^n\rightarrow\mathbb R^n$ by $\varphi(x)=-x,\,x\in\mathbb R^n$. Let $G_2$ be the subgroup of $O(n)$ generated by $G_1\cup\{\varphi\}$. Then for any $B_i^+$ and $B_j^-$ there exist isometries $\psi_1\in G_1$ and $\psi_2\in G_2$ such that
$$\psi_1(B_1^+)=B_i^+\,\,\,\,{\rm and}\,\,\,\,\psi_2(B_1^+)=B_j^-.$$
Therefore one can say that the faces of $P_n$ are congruent to each other.

More precisely,
\begin{eqnarray*}
&B_n^+&
=\{x_n=1,\,x_1+\cdots+x_{n-1}=-1\}\cap Q^n\\
&=&\{x_n=1,\,x_1+\cdots+x_{n-1}=-1\}\cap\{-1\leq x_1,\ldots,x_{n-1}\}\cap\{x_1,\ldots,x_{n-1}\leq1\}\\
&=&\{x_n=1,\,x_1+\cdots+x_{n-1}=-1\}\cap\{-1\leq x_1,\ldots,x_{n-1}\leq n-3\}\cap\{x_1,\ldots,x_{n-1}\leq 1\}\\
&=&A\cap\{x_1,\ldots,x_{n-1}\leq 1\},
\end{eqnarray*}
where $A:=\{x_n=1,\,x_1+\cdots+x_{n-1}=-1\}\cap\{-1\leq x_1,\ldots,x_{n-1}\leq n-3\}$ is the regular $(n-2)$-simplex with vertices $(n-3,-1,\ldots,-1,1),\,(-1,n-3,-1,\ldots,-1,1),\ldots,$ $(-1,\ldots,-1,n-3,1)$. Then $B_n^+$ is the truncated regular $(n-2)$-simplex, i.e., truncated by the half spaces $\{1< x_i\},\,i=1,\ldots,n-1$, at all its vertices. In case $n=3$ and $4$, $B_i^\pm$ is the regular $(n-2)$-simplex with no truncation.
\begin{center}
\includegraphics[width=4.5in]{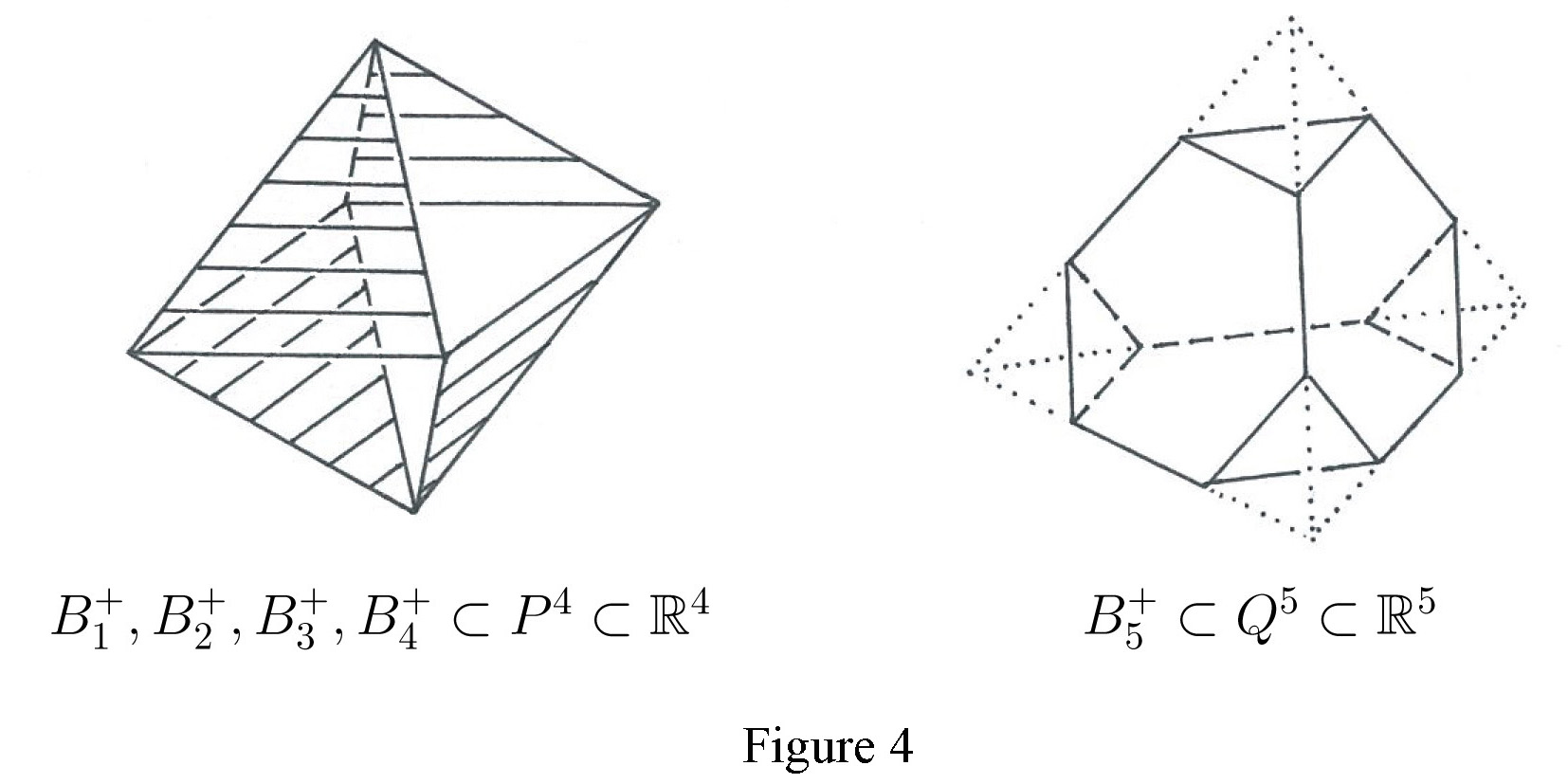}\\
\end{center}

In dimension $n\geq5$, the faces of $B_i^\pm$ consist of the faces of the regular $(n-2)$-simplex and those created by the truncation. In other words,
$$\partial B_i^\pm=\left(\bigcup_{j=1}^{n-1} F_j\right)\cup\left(\bigcup_{j=1}^{n-1} \hat{F}_j\right),$$ where $F_j$ is a subset of a face of the $(n-2)$-simplex and $\hat{F}_j$ is the face created by the truncation at each vertex. In dimension $n=3,4$, however, $\partial B_i^\pm=\bigcup_{j=1}^{n-1} F_j$.

The three straight lines $L= H\cap H_0$ mentioned above is called the {\it spine} of $H$ (or of $H_0$). The { spine} $L_n$ of $P_n$ is defined as
$$L_n=\left(\bigcup_{i=1}^nO\cone\partial B_i^+\right)\,\cup\,\left(\bigcup_{i=1}^nO\cone\partial B_i^-\right),$$
where $O$ is the origin of $\mathbb R^n$ and $O\cone\partial B_i^{\pm}$ denotes the cone which is the union of all the line segments from $O$ over $\partial B_i^{\pm}$. In fact
$$L_n=O\cone(P_n\cap(n-2){\rm -skeleton\,\,of}\,\,Q^n).$$
Since $F_j\subset\partial A$ on $\partial B_n^\pm$, we have
\begin{eqnarray*}
  \bigcup_{j=1}^{n-1} F_j\subset Q^n\cap\bigcup_{j=1}^{n-1}\{x_j=\mp1,\,x_n=\pm1,\,{x}_1+\cdots+\widehat{x}_j+\cdots+x_{n-1}=0\},
\end{eqnarray*}
and hence
\begin{eqnarray*}
(O\cone\partial B_n^+)\cup(O\cone\partial B_n^-)&\supset& O\cone\bigcup_{j=1}^{n-1} F_j\\
&\subset&\left[\bigcup_{j=1}^{n-1}\{x_j+x_n=0\}\right]\cap\,\{x_1+\cdots+x_n=0\}\cap Q^n.
\end{eqnarray*}
Therefore for $n=3,4$, $\hat{F}_j=\emptyset$ and we have
$$L_n=\left[\bigcup_{1\leq i\neq j\leq n}\{x_i+x_j=0\}\right]\,\cap P_n.$$
Actually, $L_3=L$ is the three straight lines on $P_3=H_0$ and $L_4$ is the three mutually orthogonal 2-planes on $P_4$:
 $$L_4=(\{x_1+x_2=0\}\cup\{x_1+x_3=0\}\cup\{x_1+x_4=0\})\cap P_4.$$
For $n\geq5$, however, because of the nonempty set $\cup_j \hat{F}_j$ we can just say that
\begin{eqnarray*}
\mathcal{H}^{n-2}\left(L_n\cap\left[\bigcup_{1\leq i\neq j\leq n}\{x_i+x_j=0\}\right]\,\cap P_n\right)>0,
\end{eqnarray*}
where $\mathcal{H}^{n-2}$ denotes the $(n-2)$-dimensional Hausdorff measure.
\begin{center}
\includegraphics[width=2.2in]{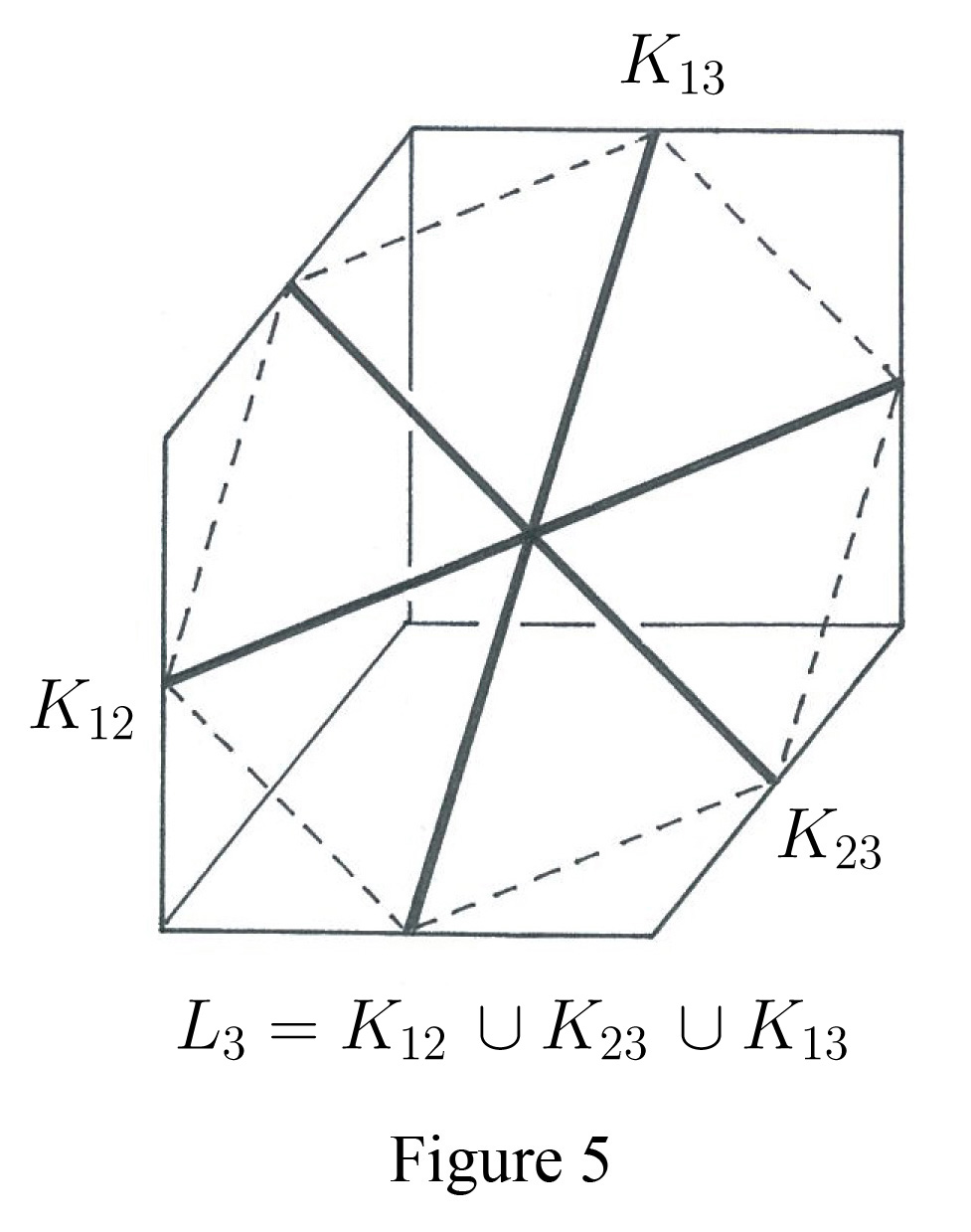}\\
\end{center}

Now fixing the spine $L_n$ of $P_n$, we want to perturb $P_n$ into a minimal hypersurface $\Sigma_4$ in $Q^n$. First, for an $(n-2)$-plane $K$ in $\mathbb R^n$, we need to define the $180^\circ$-rotation $\rho_K$ of $\mathbb R^n$ around $K$. Let $$K_{12}=\{x_1+x_2=0\}\cap\{x_1+\cdots+x_n=0\}.$$ Then both $u:=(1,1,0,\ldots,0)$ and $v:=(0,0,1,\ldots,1)$ are orthogonal to $K_{12}$. Hence the foot of perpendicular from $(x_1,\ldots,x_n)$ to $K_{12}$ is
$$(x_1,\ldots,x_n)  -\frac{x_1+x_2}{2}\,u-\frac{x_3+\cdots+x_n}{n-2}\,v.$$
Since the foot of perpendicular is the midpoint of ${\bf x}:=(x_1,\ldots,x_n)$ and $\rho_{K_{12}}({\bf x})$, we have
\begin{equation}\label{12}
\rho_{K_{12}}({\bf x})=\left(-x_2,-x_1,x_3-\frac{2}{n-2}(x_3+
\cdots+x_n),\ldots,x_n-\frac{2}{n-2}(x_3+\cdots+x_n)\right).
\end{equation}
In general, if we define
$$K_{ij}=\{x_i+x_j=0\}\cap\{x_1+\cdots+x_n=0\},$$
the $i$th and $j$th components of $\rho_{K_{ij}}(x_1,\ldots,x_n)$ are $-x_j$ and $-x_i$,
respectively.

Note that for all $n$,
$$\rho_{K_{ij}}(\tilde{P}_n)=\tilde{P}_n,\,\,\,{\rm if}\,\,\tilde{P}_n:=\{x_1+\cdots+x_n=0\}.$$
For $n=3,4$, we see that
\begin{equation}\label{34}
\rho_{K_{ij}}(L_n)=L_n,\,\,\,\,\rho_{K_{ij}}(Q^n)=Q^n\,\,\,\,{\rm and}\,\,\,\,\rho_{K_{ij}}(P_n)=P_n
\end{equation}
because
\begin{equation*}\label{n=3}
\rho_{K_{12}}(x_1,x_2,x_3)=(-x_2,-x_1,-x_3),
\end{equation*}
and
\begin{equation*}\label{n=4}
\rho_{K_{12}}(x_1,x_2,x_3,x_4)=(-x_2,-x_1,-x_4,-x_3).
\end{equation*}
Unfortunately, however, for $n\geq5$ we have
\begin{equation}\label{5}
\rho_{K_{ij}}(L_n)\neq L_n,\,\,\,\,\rho_{K_{ij}}(Q^n)\neq Q^n\,\,\,\,{\rm and}\,\,\,\,\rho_{K_{ij}}(P_n)\neq P_n,
\end{equation}
because
$$\rho_{K_{ij}}(\{x_k=1\})\neq\{x_l=-1\}\,\,\,{\rm for\,\,any}\,\,l\,\,{\rm if}\,\,k\neq i,\, j,$$
even though
$$\rho_{K_{ij}}(\{x_i=1\})=\{x_j=-1\}.$$

Let $\hat{O}=(2,0,\ldots,0)$ and consider $(O\cone B_1^+)\cup(\hat{O}\cone B_1^+)$ and $$\Gamma_1:=(O\cone\partial B_1^+)\cup(\hat{ O}\cone\partial B_1^+).$$ Here we want to deform
$(O\cone B_1^+)\cup(\hat{O}\cone B_1^+)$ into a minimal hypersurface spanning $\Gamma_1$. Let $\Pi_1$ be the orthogonal projection of $\mathbb R^n$ onto the hyperplane $\{x_2+\cdots+x_n=0\}$. Note that $\Pi_1(O\cone B_1^+\cup\hat{O}\cone B_1^+)$ contains $\overline{O\hat{O}}$. This fact, together with the convexity of $B_1^+$ in $\{x_1=1\}$, implies that $\Pi_1(O\cone B_1^+\cup\hat{O}\cone B_1^+)$ is convex on $\{x_2+\cdots+x_n=0\}$. Since $\Gamma_1$ is the graph of a piecewise linear function on $\Pi_1(\Gamma_1)$,
Jenkins-Serrin's theorem \cite{JS} states that
there exists a unique minimal hypersurface $\Sigma_0$ spanning $\Gamma_1$ as a graph over $\Pi_1(O\cone B_1^+\cup \hat{O}\cone B_1^+)$. (See Figure 6). Let $\Sigma_1=\Sigma_0\cap Q^n$. From the symmetry of $\Gamma_1$ with respect to $\{x_1=1\}$ it follows that $\Sigma_1$ is also symmetric with respect to $\{x_1=1\}$ and hence $\Sigma_1$ is perpendicular to $\{x_1=1\}$ along its boundary on $\{x_1=1\}$.
\begin{center}
\includegraphics[width=1.5in]{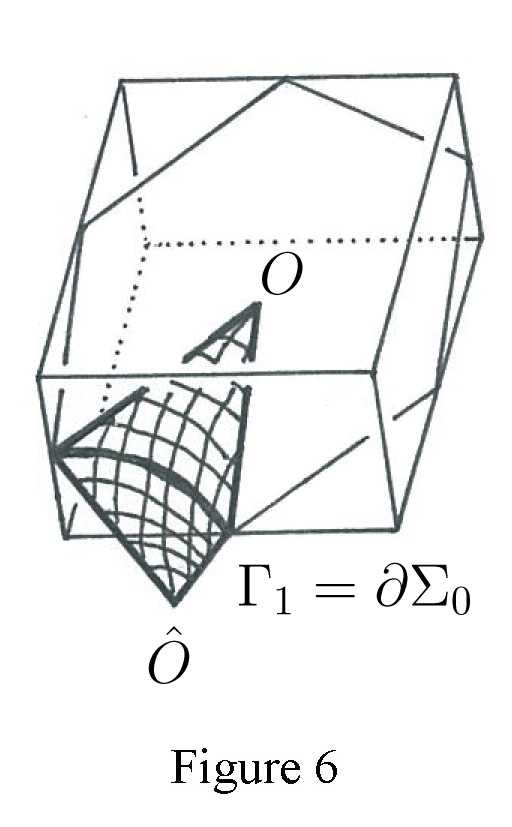}\\
\end{center}

Recall that $G_1$ is the subgroup of $O(n)$ consisting of all the isometries acting on $\{x_1,\ldots,x_n\}$ as permutations. Let $G_0$ be the subgroup of $G_1$ consisting of all the permutations of $\{x_1,\ldots,x_n\}$ fixing $x_1$. Note that $\Gamma_1$ is invariant under any $\psi\in G_0$. Hence the uniqueness of the minimal graph $\Sigma_0$ spanning $\Gamma_1$ implies that $\Sigma_0$ is also invariant under $G_0$.

We now try to extend $\Sigma_1$ analytically to obtain a complete minimal hypersurface in $\mathbb R^n$ as follows. Define $$\Sigma_2=\bigcup_{\psi\in G_1}\psi(\Sigma_1),\,\,\,\,\,\Sigma_3=\bigcup_{\psi\in G_1}\psi(\varphi(\Sigma_1)),\,\,\,\,\,\Sigma_4=\Sigma_2\cup\Sigma_3,$$
where $\varphi:\mathbb R^n\rightarrow\mathbb R^n$, $\varphi(x)=-x$. (See Figure 7.) Clearly $\psi(L_n)=L_n$ for any $\psi\in G_2$. From the invariance of $\Sigma_0$ under $G_0$ we see that if $\psi_1(\Sigma_1)$ and $\psi_2(\Sigma_1),\psi_1,\psi_2\in G_1,$ span the same boundary inside $Q^n$, i.e., if $\psi_1(\Sigma_1)\setminus\partial Q^n=\psi_2(\Sigma_1)\setminus\partial Q^n$, then they must coincide. Hence both $\Sigma_2$ and $\Sigma_3$ are embedded. Moreover, we have
$$\partial\Sigma_2\cap\partial\Sigma_3 =L_n,\,\,\,\,\,\partial\Sigma_2\setminus L_n\subset\partial Q^n,\,\,\,\,\,\partial\Sigma_3\setminus L_n\subset\partial Q^n.$$
Hence $\Sigma_4$ is a connected, $C^0$, piecewise analytic manifold with $\partial\Sigma_4\subset\partial Q^n$.
\begin{center}
\includegraphics[width=2.5in]{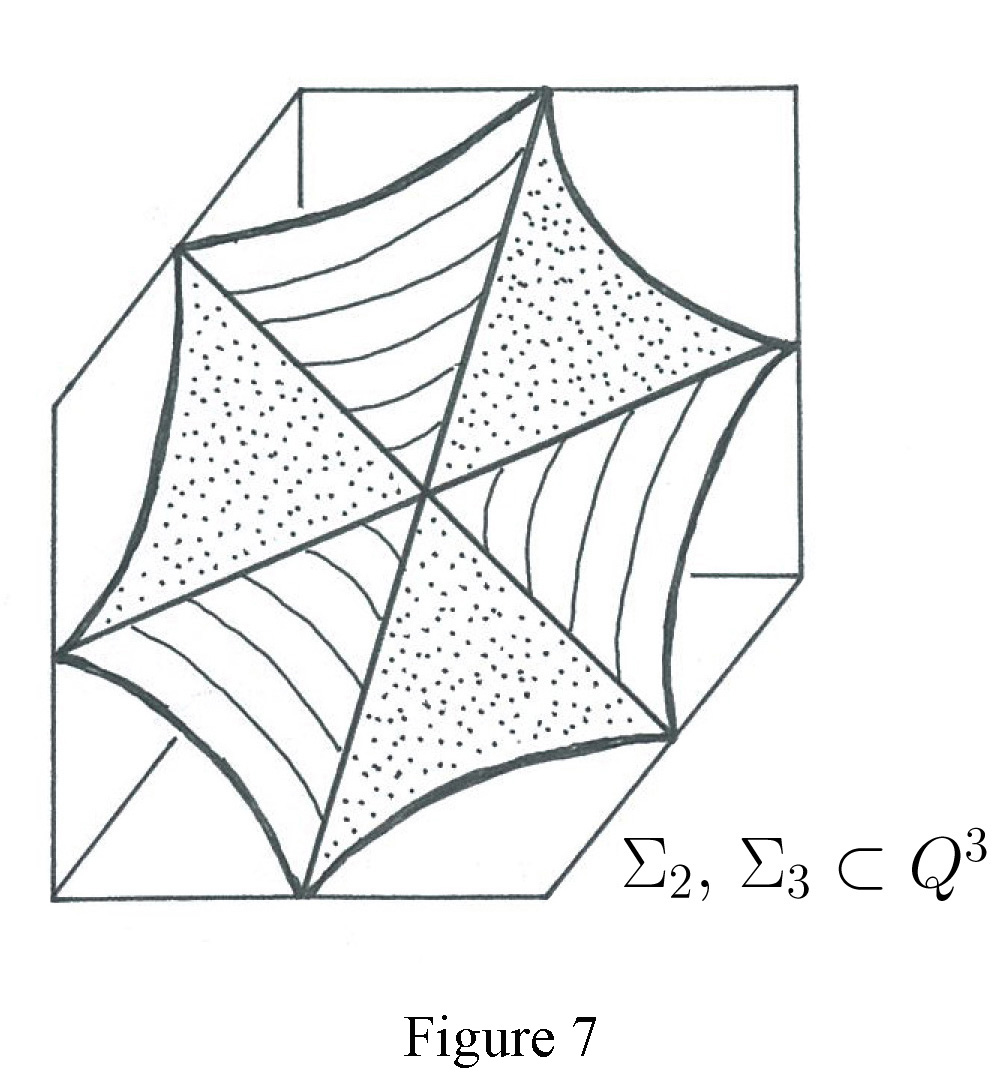}\\
\end{center}

We claim here that $\Sigma_4$ is an analytic extension of $\Sigma_1$ only when $n=3,4$. From the well-known removable singularity theorem (Theorem 1.4, \cite{HL}) it follows that the following four statements are equivalent:
\begin{quote}
$\,\,\,\,\,\,\,\,\Sigma_4$ is an analytic extension of $\Sigma_1$.\\
$\Leftrightarrow$ The tangent planes to $\Sigma_1$ and to $\Sigma_4\setminus\Sigma_1$ coincide at every
 point of $\Sigma_1\cap\Sigma_4\cap K_{12}$.\\
$\Leftrightarrow$ $\rho_{K_{12}}(\Sigma_1)$ is a subset of $\Sigma_4$.\\
$\Leftrightarrow$
\begin{equation}\label{coincide}
\rho_{K_{12}}(O\cone \partial B_1^+)=O\cone\partial B_2^-.
\end{equation}
\end{quote}
Remark that
$$ (O\cone\cup_jF_j)\cap(O\cone\partial B_1^+)\,\,\subset\,\,\cup_{i\neq1}\{x_1+x_i=0\}
\cap\,\{x_1+\cdots+x_n=0\}\cap Q^n$$
and
\begin{equation}\label{21}
(O\cone\cup_jF_j)\cap(O\cone\partial B_2^-)\,\,\subset\,\,\cup_{i\neq2}\{x_2+x_i=0\}
\cap\,\{x_1+\cdots+x_n=0\}\cap Q^n.
\end{equation}
From \eqref{12} we see that the sum of the second and $k$-th components of $\rho_{K_{12}}(x_1,\ldots,x_n)$ for $(x_1,\ldots,x_n)\in (O\cone \cup_j F_j)\cap (O\cone\partial B_1^+)$ equals
$$-x_1+x_k-\frac{2}{n-2}(x_3+\cdots+x_n),$$
which does not vanish when $x_1+x_i=0$, $i\neq1$, and $x_1+\cdots+x_n=0$, if $k\geq3$ and $n\geq5$. It follows from \eqref{21} that $O\cone\partial B^+_1$ cannot be mapped by $\rho_{K_{12}}$ to $O\cone\partial B^-_2$ if $n\geq5$, which contradicts \eqref{coincide}. Therefore $\Sigma_4$ cannot be an analytic extension of $\Sigma_1$ if $n\geq5.$ If $n=3,4$, however, \eqref{coincide} follows from \eqref{34}
and therefore $\Sigma_4$ is an embedded analytic extension of $\Sigma_1$, as claimed.

From here on, assume $n=4$. Note that $\Sigma_4$ meets $\partial Q^4$ orthogonally. Therefore repeated reflections of $\mathbb R^n$ across the hyperplanes $\{x_i=2k+1\}$ for all $i=1,2,3,4$ and for all integers $k$ give rise to the desired complete embedded analytic  minimal hypersurface $\Sigma_P$ in $\mathbb R^4$. Obviously $\Sigma_P$ is periodic in each direction of the four coordinate axes of $\mathbb R^4$.

Interestingly, $\Sigma_4$ can be interpreted as an equator in $Q^4$ between the two {\it poles} $p^+=(1,1,1,1)$ and $p^-=(-1,-1,-1,-1)$ of $\partial Q^4$. Define two 4-prong {\it polar grids} $\gamma^+=\cup_{i=1}^4\ell_i^+$ containing $p^+$ and $\gamma^-=\cup_{i=1}^4\ell^-_i$ containing $p^-$ by
$$\ell_1^+=\{(x_1,1,1,1):-1\leq x_1\leq1\},\ldots,\ell^+_4=\{(1,1,1,x_4):-1\leq x_4\leq1\},$$
$$\ell_1^-=\{(x_1,-1,-1,-1):-1\leq x_1\leq1\},\ldots,\ell^-_4=\{(-1,-1,-1,x_4):-1\leq x_4\leq1\}.$$
Let $\gamma_\varepsilon^+$ be an $\varepsilon$-tubular neighborhood of $\gamma^+$ in $Q^4$ and $\gamma_\varepsilon^-$ that of $\gamma^-$ in $Q^4$. Then the following four sets are diffeomorphic:
\begin{equation}\label{diffeo}
\partial\gamma_\varepsilon^+\setminus\partial Q^4\,\,\approx\,\,P_4\,\,\approx\,\,\Sigma_4\,\,\approx\,\,\partial\gamma_\varepsilon^-\setminus\partial Q^4.
\end{equation}
It is in this sense that $\Sigma_4$ is called an equator between the two poles.

Define the {\it 1-dimensional grid} $\gamma_{\infty}^+$ ($\gamma_{\infty}^-$, respectively) in $\mathbb R^4$ to be the set of all lines parallel to the four coordinate axes, consisting of all the points $(x_1,x_2,x_3,x_4)$ three components of which are integers $\equiv1\,({\rm mod}\, 4)$ ($\equiv-1\,({\rm mod}\,4)$, respectively). Then $\Sigma_P$ can be viewed ``roughly" as an equi-distance set of the grids $\gamma_\infty^+$ and $\gamma_\infty^-$ in the following sense. Let
$$(2Q)^4=[-3,1]^4=\{(x_1,x_2,x_3,x_4):-3\leq x_i\leq1\}.$$
Identifying the two points on the parallel faces of $(2Q)^4$, one can make $(2Q)^4$ into a four-dimensional torus $T^4$. With this identification $\Sigma_P\cap (2Q)^4$ becomes a compact 3-dimensional embedded minimal hypersurface $\Sigma_P'$ in $T^4$. If follows from \eqref{diffeo} that $\Sigma_P'$ is diffeomorphic to the boundary of a tubular neighborhood of $\gamma_{\infty}^+$ in $T^4$, and to that of $\gamma_{\infty}^-$ in $T^4$ as well. One can foliate  $T^{4}\setminus(\gamma_{\infty}^+\cup \gamma_{\infty}^-)$ by a 1-parameter family of $3$-dimensional hypersurfaces which are diffeomorphic to the boundary of a tubular neighborhood of $\gamma_{\infty}^+$  and which sweep out $T^4$ from $\gamma_{\infty}^+$ to $\gamma_{\infty}^-$. Applying the minimax argument, one can find a compact embedded minimal hypersurface $\Sigma_T$ from this family of hypersurfaces. $\Sigma_T$ should be the same as $\Sigma_P'$. And one easily sees that $\pi_1(\Sigma_P')$ is the free group with $4$ generators. The hypersurface $\Sigma_P$ divides $\mathbb R^4$ into two congruent labyrinths as one of them is mapped to the other by $\rho_{K_{12}}$.

In conclusion, we summarize the properties of $\Sigma_P$ as follows.

\begin{theorem}
There exists a minimal hypersurface $\Sigma_P$ in $\mathbb R^4$ which generalizes the Schwarz $P$-surface of $\mathbb R^3$ with the following properties{\rm :}

{\rm a)} $\Sigma_P$ is embedded and periodic in each direction of the four coordinate axes of $\mathbb R^4$.

{\rm b)} $\Sigma_P$ divides $\mathbb R^4$ into two congruent labyrinths.

{\rm c)} One can normalize the coordinates of $\mathbb R^4$ such that $\Sigma_P$ has period $4$ in each direction. Moreover, for every point $p\in\mathbb R^4$ with coordinates $(2k,2l,2m,2n)$, $k,l,m,n$$:$  integers, three mutually orthogonal planes pass through $p$ and totally lie in $\Sigma_P$.

{\rm d)} Let $T^4$ be the $4$-dimensional torus obtained by identifying the parallel faces of the cube $[-3,1]^4$ in $\mathbb R^4$. Then $\Sigma_P\cap[-3,1]^4$ becomes a compact embedded minimal hypersurface $\Sigma_P'$ in $T^4$. Let $\gamma^4\subset\mathbb R^2\subset\mathbb R^4$ be a four-leaved rose, i.e., the union of four Jordan curves which intersect each other only at one given point. Then $\Sigma_P'$ is diffeomorphic to the boundary of a tubular neighborhood of $\gamma^4$ in $\mathbb R^4$ and $\pi_1(\Sigma_P')$ is the free group with $4$ generators.
\end{theorem}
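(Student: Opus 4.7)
My plan is to deduce each part from the construction of $\Sigma_4$ together with the reflection extension across $\partial Q^4$. For (a), I would invoke the orthogonality of $\Sigma_4$ with $\partial Q^4$, which is built into the construction via the symmetry of $\Gamma_1$ across $\{x_1=1\}$ and the analogous symmetries on the other faces. The Schwarz reflection principle then extends $\Sigma_4$ analytically across each hyperplane $\{x_i=2k+1\}$, $i=1,\ldots,4$, $k\in\mathbb{Z}$, producing a complete embedded analytic minimal hypersurface $\Sigma_P\subset\mathbb{R}^4$; composing the reflections across $\{x_i=1\}$ and $\{x_i=3\}$ yields translation by $4$ in the $x_i$-direction, proving period $4$ in each coordinate direction. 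For (b), $\Sigma_P$ is a properly embedded closed hypersurface, hence separates $\mathbb{R}^4$. The involution $\rho_{K_{12}}$ of \eqref{12} preserves $Q^4$, $L_4$, and $P_4$ by \eqref{34}, so it preserves $\Sigma_4$ while swapping the poles $p^+$ and $p^-$; as it also permutes the reflection hyperplanes $\{x_i=2k+1\}$, it extends through the reflection construction to a global isometry of $\mathbb{R}^4$ preserving $\Sigma_P$ and interchanging the two components of $\mathbb{R}^4\setminus\Sigma_P$, furnishing the congruence.

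For (c), the period-$4$ normalization comes from (a). At the origin the spine $L_4\subset\Sigma_4\subset\Sigma_P$ is the union of the three $2$-planes $\{x_1+x_j=0\}\cap\tilde{P}_4$ for $j=2,3,4$, where $\tilde{P}_4:=\{x_1+x_2+x_3+x_4=0\}$. A direct computation shows that their normals inside $\tilde{P}_4$---namely the orthogonal projections $\tfrac{1}{2}(1,1,-1,-1)$, $\tfrac{1}{2}(1,-1,1,-1)$, $\tfrac{1}{2}(1,-1,-1,1)$ of the vectors $(1,1,0,0)$, $(1,0,1,0)$, $(1,0,0,1)$ onto $\tilde{P}_4$---are pairwise orthogonal, so the three $2$-planes are mutually orthogonal. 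Since translation by $(2k,2l,2m,2n)$ is a symmetry of $\Sigma_P$ by (a), an identical triple of mutually orthogonal planes lies in $\Sigma_P$ through every such lattice point.

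For (d), the period-$4$ property descends $\Sigma_P\cap[-3,1]^4$ to a compact embedded minimal hypersurface $\Sigma_P'\subset T^4$, while $\gamma^+_\infty$ descends to a four-leaved rose in $T^4$ (each family of parallel grid lines closes to a single circle, all four meeting at the image of $p^+$). Composing the toric identification with \eqref{diffeo} identifies $\Sigma_P'$ with the boundary of a tubular neighborhood of this rose. Since a tubular neighborhood of a $1$-complex in a $4$-manifold is determined up to diffeomorphism by its local structure, $\Sigma_P'$ is diffeomorphic to the boundary of a tubular neighborhood of $\gamma^4\subset\mathbb{R}^2\subset\mathbb{R}^4$, which is a $4$-dimensional handlebody built from one $0$-handle and four $1$-handles; van Kampen then gives $\pi_1(\Sigma_P')$ free of rank $4$. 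The main obstacle I anticipate is in (d): to verify that the diffeomorphisms of \eqref{diffeo} assemble consistently through the toric identification into an actual diffeomorphism with $\partial(\text{tubular neighborhood of }\gamma^4)$, rather than only a homotopy equivalence.
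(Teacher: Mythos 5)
Your arguments for (a), (b), and (d) follow the paper's own line of reasoning (orthogonal reflection extension for (a); the involution $\rho_{K_{12}}$ swapping poles and permuting the reflection hyperplanes for (b); the identifications of \eqref{diffeo} for (d), where the paper is no more rigorous than you are and the obstacle you flag is indeed real but left implicit in the paper as well).

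Part (c), however, has two genuine gaps. First, you write that ``translation by $(2k,2l,2m,2n)$ is a symmetry of $\Sigma_P$ by (a).'' Part (a) establishes period $4$, not period $2$, and translation by $(2,0,0,0)$ is in fact \emph{not} a symmetry of $\Sigma_P$: the piece of $\Sigma_P$ in $[1,3]\times[-1,1]^3$ is a reflection, not a translate, of $\Sigma_4$, and these differ. What is true, and what the construction actually provides, is that the group generated by the reflections across $\{x_i=2j+1\}$, $i=1,\dots,4$, $j\in\mathbb{Z}$, preserves $\Sigma_P$ and has the lattice $2\mathbb{Z}^4$ as the orbit of the origin; since these are isometries, the image of the spine at each such lattice point is again a union of three mutually orthogonal $2$-dimensional pieces.

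Second, the spine $L_4$ is by definition only $(\{x_1+x_2=0\}\cup\{x_1+x_3=0\}\cup\{x_1+x_4=0\})\cap P_4$, i.e.\ the intersection of the three $2$-planes with $Q^4$, not the full planes; so $L_4\subset\Sigma_4\subset\Sigma_P$ does not yet give the theorem's assertion that the planes ``totally lie in $\Sigma_P$.'' To get the full plane $K_{1j}$, one must observe that $K_{1j}$ meets the subcube $\prod_i[2a_i-1,2a_i+1]$ in a $2$-dimensional set precisely when $(a_1,a_2,a_3,a_4)$ has $a_1=-a_j$ and (for, say, $j=2$) $a_3=-a_4$, and that for such a subcube the composition of reflections $\sigma$ carrying $Q^4$ onto it satisfies $\sigma(K_{12}\cap Q^4)=K_{12}\cap\sigma(Q^4)$; as $\sigma(\Sigma_4)\subset\Sigma_P$ and these pieces tile $K_{12}$, the whole plane is contained in $\Sigma_P$. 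The paper itself elides this step, but your draft cannot simply assert it via translation.
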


{\bf Remark 1.} In conclusion, Schwarz's minimal surface has been constructed in $\mathbb R^3$ and $\mathbb R^4$ but not in $\mathbb R^n$ for $n\geq5$. Strangely, this situation is similar to a famous classical problem in algebra: solvability of the cubic and quartic equations in radicals and insolvability of the quintic. This may not be a pure coincidence, remarking that permutations of $\{x_1,\cdots,x_n\}$ are critically used in the construction of $\Sigma_4$ and that Galois theory is based on the group of permutations. Moreover, as the roots of an algebraic equation are required to be expressed only with the radicals, we have strongly required that the spine $L_n$ be totally geodesic.\\

{\bf Remark 2.} Our fundamental piece $\Sigma_1$ can be analytically extended to a complete embedded minimal hypersurface in $\mathbb R^n$ for $n=3,4$, but not for $n\geq5$. However, our guess is that such a complete embedded minimal hypersurface $\Sigma_P$ should exist even in $\mathbb R^n$ for $n\geq5$. Near $\Sigma_1$ there should exist an analytic minimal hypersurface $\Sigma_1'$ whose boundary is more flexible than totally geodesic $\Sigma_1\cap\Gamma_1$ and which is orthogonal to $\partial Q^n$ so that $\Sigma_1'$ may extend to a complete embedded minimal hypersurface $\Sigma_P$ in $\mathbb R^n$. $\Sigma_P$ should be a minimax solution in a 1-parameter family of hypersurfaces sweeping out $\mathbb R^n$ from $\gamma_{\infty}^+$ to $\gamma_{\infty}^-$. Here $\gamma_{\infty}^+$ and $\gamma_{\infty}^-$ are the dual pair of all lines consisting of the points $(x_1,\ldots,x_n)$, $(n-1)$-components of which are integers $\equiv1\,({\rm mod}\,4)$ and $\equiv-1\,({\rm mod}\,4)$, respectively.

\section{Schwarz's $D$-surface}
Schwarz's $D$-surface $R$ is one of the simplest among dozens of triply periodic minimal surfaces in $\mathbb R^3$. Its fundamental piece $R_0$ spans the skew quadrilateral with vertex angles $\pi/3,\pi/2,\pi/2,\pi/2$. It is interesting to notice that $R_0$ is a quarter of Schwarz's initial surface $S_0$ (Figure 8). Therefore Schwarz's $P$-surface and $D$-surface are the conjugate minimal surfaces.
\begin{center}
\includegraphics[width=1.9in]{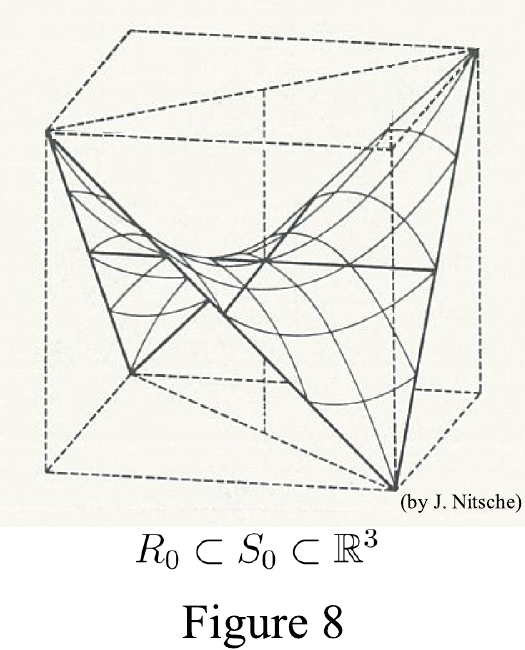}\\
\end{center}

Thanks to the single vertex angle of $\pi/3$ in $R_0$, six congruent pieces surrounding that vertex constitute a hexagonal minimal surface $R_1$ whose vertex angles are all $\pi/2$. (See Figure 9.) \begin{center}
\includegraphics[width=4.7
in]{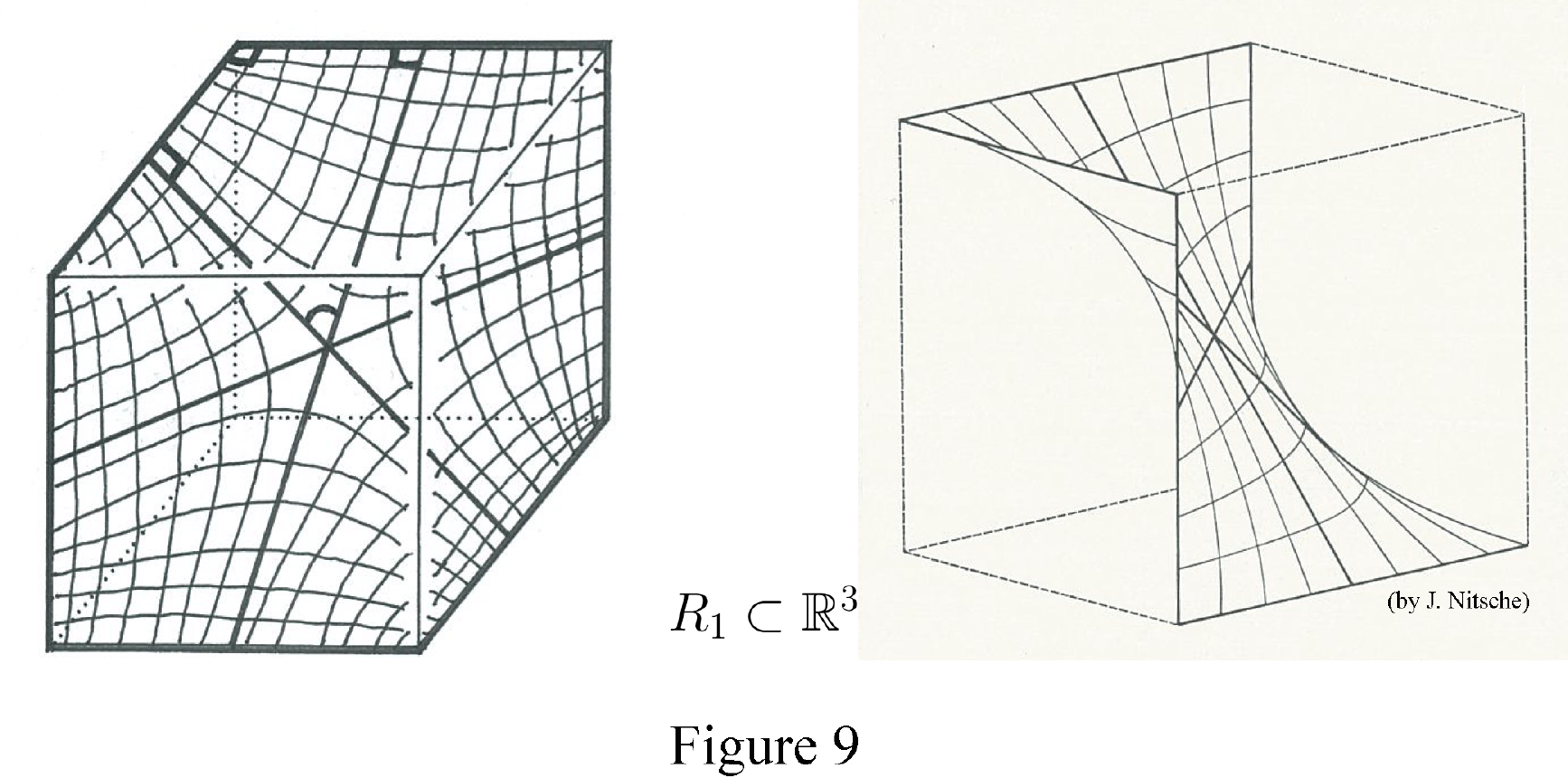}\\
\end{center}Since $\partial R_1$ is a subset of the 1-skeleton of a cube, $R_1$ can be extended to the complete embedded minimal surface $R$. We can generalize this nice property of $R_1$ in higher dimension to construct the higher-dimensional Schwarz $D$-surface in $\mathbb R^n$ for any $n$ as follows.

\begin{theorem}
There exists an $(n-1)$-dimensional Schwarz's $D$-surface $\Sigma_D$ in $\mathbb R^n$ for any $n\geq4${\rm :}

{\rm a)} $\Sigma_D$ is complete and embedded.

{\rm b)} $\Sigma_D$ is periodic in every direction of the coordinate axes of $\mathbb R^n$.

{\rm c)} If $\Sigma_D$ is normalized to have period $2$ in each coordinate direction, at every point $p\in\mathbb R^n$ with odd integer coordinates $\Sigma_D$ completely contains $n-1$ $(n-2)$-planes.
\end{theorem}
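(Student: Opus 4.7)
The plan is to mirror the construction of $\Sigma_P$ from Section~1, but to take the fundamental piece to have its boundary on the $(n-2)$-skeleton of a coordinate cube rather than on diagonal hyperplanes; the extending $180^\circ$-rotations then act as coordinate reflections that preserve the cubic lattice $(2\mathbb{Z})^n$ for every $n \ge 4$, which is precisely what eliminates the obstruction \eqref{5} that blocked $\Sigma_P$ for $n \ge 5$.

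First, I would set up the fundamental piece $\Sigma_D^1$ as a minimal hypersurface in the cube $Q^n = [-1,1]^n$ whose boundary is a prescribed closed $(n-2)$-cycle of $(n-2)$-faces of $Q^n$, each of the form $\{x_i = \epsilon_i,\, x_j = \epsilon_j\} \cap Q^n$ with $\epsilon_i, \epsilon_j \in \{-1, 1\}$, arranged so that adjacent boundary faces meet at right dihedral angles at their common $(n-3)$-subfaces — the higher-dimensional analog of the skew hexagon bounding $R_1$. After a suitable projection onto a hyperplane, this cycle is the piecewise linear graph of a function on a convex polytopal domain, and Jenkins--Serrin's theorem \cite{JS} produces a unique minimal hypersurface $\Sigma_D^1$ spanning it. By uniqueness, $\Sigma_D^1$ inherits all the discrete symmetries (coordinate permutations and sign flips) of its boundary cycle, and in particular meets each supporting coordinate hyperplane $\{x_i = \pm 1\}$ orthogonally along its boundary.

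Second, I would extend $\Sigma_D^1$ analytically via $180^\circ$-rotations about its boundary faces. Each boundary face $F$ lies in an affine $(n-2)$-plane of the form $\{x_i = \pm 1,\, x_j = \pm 1\}$, and the rotation $\rho_F$ around $F$ acts by negating $x_i \mp 1$ and $x_j \mp 1$ while fixing the remaining coordinates — an isometry of $\mathbb R^n$ that preserves $(2\mathbb Z)^n$ for every $n$. Because $\Sigma_D^1$ meets $F$ with the correct right-angle dihedral, the removable singularity theorem (\cite{HL}, Theorem~1.4), applied exactly as in Section~1, ensures that $\Sigma_D^1 \cup \rho_F(\Sigma_D^1)$ is an analytic minimal hypersurface across $F$. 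Iterating over all boundary faces and their orbits under the group generated by $\{\rho_F\}$ tiles $\mathbb R^n$ with congruent copies of $\Sigma_D^1$ and produces the desired complete embedded periodic minimal hypersurface $\Sigma_D$, establishing (a) and (b). Property (c) follows by inspecting the distinguished vertices of $Q^n$ at which exactly $n-1$ bounding faces meet: at each such vertex the analytic extension must contain the $n-1$ corresponding coordinate $(n-2)$-planes.

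The main obstacle is the combinatorial design of the boundary cycle — verifying that the $(n-2)$-faces one selects close up into a cycle that projects as a graph, that adjacent faces always meet at right angles, and that the orbit of $\Sigma_D^1$ under the rotation group embeds without overlap and covers $\mathbb R^n$. Once this combinatorics is in hand, the analytic content — existence via Jenkins--Serrin, smooth extension via Schwarz reflection, and analyticity via the removable singularity theorem — follows as in Section~1; the key conceptual difference is only that the extending isometries here are coordinate reflections (preserving $(2\mathbb Z)^n$) rather than the diagonal reflections $\rho_{K_{ij}}$ which obstructed the $P$-surface construction for $n \ge 5$.
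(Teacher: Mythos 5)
Your overall strategy matches the paper's: place the boundary of the fundamental piece on the $(n-2)$-skeleton of a coordinate cube, solve the Dirichlet problem via Jenkins--Serrin, and extend by $180^\circ$-rotations about the boundary $(n-2)$-faces, observing that these rotations preserve the cubic lattice for every $n$ and hence bypass the obstruction \eqref{5}. That observation is the correct key insight, and in this sense you have identified the right approach.

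However, there are two problems. First, a conceptual error: you write that $\Sigma_D^1$ ``meets each supporting coordinate hyperplane $\{x_i = \pm 1\}$ orthogonally along its boundary'' and later refer to the ``correct right-angle dihedral.'' This is not the right picture here. The boundary of the fundamental piece lies on totally geodesic $(n-2)$-planes and the extension mechanism is Schwarz reflection (rotation by $\pi$ around those planes), not orthogonal meeting with a hyperplane as in the $\Sigma_P$ construction. In fact the paper shows the opposite: the fundamental piece $\Sigma_5$ is \emph{tangent} to the supporting face $F_i^1$ of $\tilde{Q}^n$ at the center point $q_i^+$, and this tangency is exactly what makes the translated copy $\tau(\Sigma_6)$ glue onto $\Sigma_6$ there.

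Second, and more seriously, the proposal stops short at precisely the hard part of the proof. Asserting that ``the orbit of $\Sigma_D^1$ under the rotation group embeds without overlap and covers $\mathbb R^n$'' is exactly what needs to be established, and you label it as ``the main obstacle'' without closing it. The paper resolves this not by iterating rotations blindly but by a two-step matching argument: it defines $\Sigma_6$ directly as the union of congruent minimal equators, one in each ``black'' subcube of the checkerboard tessellation of $2\tilde{Q}^n$, and separately defines $\Sigma_7$ as the local analytic continuation of $\Sigma_5$ across $q_1^+$ by the rotations $\rho_i$. It then proves \eqref{67}, that $\Sigma_6 = \Sigma_7$ near $q_1^+$, using the algebraic identity $\rho_i \circ \rho_j = \lambda_i \circ \lambda_j$ (composition of two rotations equals composition of two hyperplane reflections), which preserves both the checkerboard and the origin. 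It further proves \eqref{1267}, that the translated copy $\tau(\Sigma_6)$ also agrees with $\Sigma_7$ near $q_1^+$, which is what allows the periodic extension $\Sigma_D = \bigcup \tau_{2k_1,\ldots,2k_n}(\Sigma_6)$ to be analytic across the face centers. Without this matching lemma, the claimed embeddedness and analyticity of the extension are not justified; that lemma, together with the tangency observation, is the real content of the proof of this theorem, and your proposal does not supply it.
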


\begin{proof}
In the preceding section $\Sigma_4$ is interpreted as an equator in $Q^4$ between the two poles $(1,1,1,1)$ and $(-1,-1,-1,-1)$. Here we introduce another type of equator in $\tilde{Q}^n:=[0,1]^n$ between the poles $p^0=(0,\ldots,0)$ and $p^1=(1,\ldots,1)$ in $\tilde{Q}^n$. $\tilde{Q}^n$ has $2n$ faces $F_i^0:=\{x_i=0\}\cap\partial \tilde{Q}^n$ and $F_i^1:=\{x_i=1\}\cap\partial \tilde{Q}^n$ for $ i=1,\ldots, n$. Define
$$F^0=\bigcup_{i=1}^nF_i^0,\,\,\,\,F^1=\bigcup_{i=1}^nF_i^1,\,\,\,\,\Gamma_2=F^0\cap F^1.$$
Clearly
$$\Gamma_2=\partial F^0=\partial F^1.$$
$\Gamma_2$ is homeomorphic to $\mathbb S^{n-2}$. Among $2^n$ vertices of $\tilde{Q}^n$, $\Gamma_2$ contains $2^n-2$ of them, leaving out only $p^0$ and $p^1$. As a CW-complex $\tilde{Q}^n$ has the $(n-2)$-skeleton which consists of $(n-2)$-dimensional cubes. The total number of $(n-2)$-dimensional cubes in the $(n-2)$-skeleton of $\tilde{Q}^n$ is $2n(n-1)$. Half of them contains either $p^0$ or $p^1$. Hence $\Gamma_2$ contains $n(n-1)$ cubes.

Let $\Pi_2$ be the orthogonal projection of $\mathbb R^n$ onto the hyperplane $\tilde{P}_n=\{x_1+\cdots+x_n=0\}$. Then $\Pi_2(\Gamma_2)$ bounds a convex region $U:=\Pi_2(\tilde{Q}^n)\subset \tilde{P}_n$. The vertices of $U$ are the projections under $\Pi_2$ of all the vertices of $\tilde{Q}^n$ except for $p^0$ and $p^1$.  Since $\Gamma_2$ is the graph of a piecewise linear function defined on $\Pi_2(\Gamma_2)$, Jenkins-Serrin's theorem gives a unique minimal hypersurface $\Sigma_5$ spanning $\Gamma_2
$ as a graph over $U$. $\Sigma_5=R_1$ in case $n=3$. Obviously,
$$\Sigma_5\subset\tilde{Q}^n\,\,\,{\rm because}\,\,\,\Gamma_2\subset\partial\tilde{Q}^n.$$
As $\Gamma_2$ is invariant under the isometries of $\mathbb R^n$ acting on $\{x_1,\ldots,x_n\}$ as permutations, so is $\Sigma_5$. Hence one can see that any pair of antipodal vertices $\{p,q\}$ (i.e., ${\rm dist}(p,q)=\sqrt{n}$) of $\tilde{Q}^n$ uniquely determines a minimal equator between them which is congruent to $\Sigma_5$. Let's denote this minimal equator by $\Sigma_{\{p,q\}}$.

Define
$$2\tilde{Q}^n=[-1,1]\times\cdots\times[-1,1]\subset\mathbb R^n.$$
The hyperplanes $\{x_i=0\},i=1,\ldots,n$, split $2\tilde{Q}^n$ into $2^n$ subcubes each of which is congruent to $\tilde{Q}^n$. One can make $2\tilde{Q}^n$ into an $n$-dimensional checkerboard by selecting the congruent subcubes in an alternating way. Let's denote the ``black" part of the checkerboard containing $\tilde{Q}^n$ by $\frac{2\tilde{Q}^n}{2}$. In each subcube of  $\frac{2\tilde{Q}^n}{2}$ we want to put a minimal hypersurface congruent to $\Sigma_5$ as follows. Let $Q$ be a copy of $\tilde{Q}^n$ in $\frac{2\tilde{Q}^n}{2}$. $Q$ has a unique vertex $p_Q$ which is antipodal to $O$ and then $Q$ has a unique minimal equator $\Sigma_{\{O,\,p_Q\}}$ determined by the antipodal pair $\{O,p_Q\}$.

Combining all the minimal hypersurfaces $\Sigma_{\{O,\,p_Q\}}$ in each subcube $Q$ of $\frac{2\tilde{Q}^n}{2}$, we define
$$\Sigma_6=\bigcup_{Q\subset\frac{2\tilde{Q}^n}{2}}\Sigma_{\{O,\,p_Q\}}.$$
\begin{center}
\includegraphics[width=2.2in]{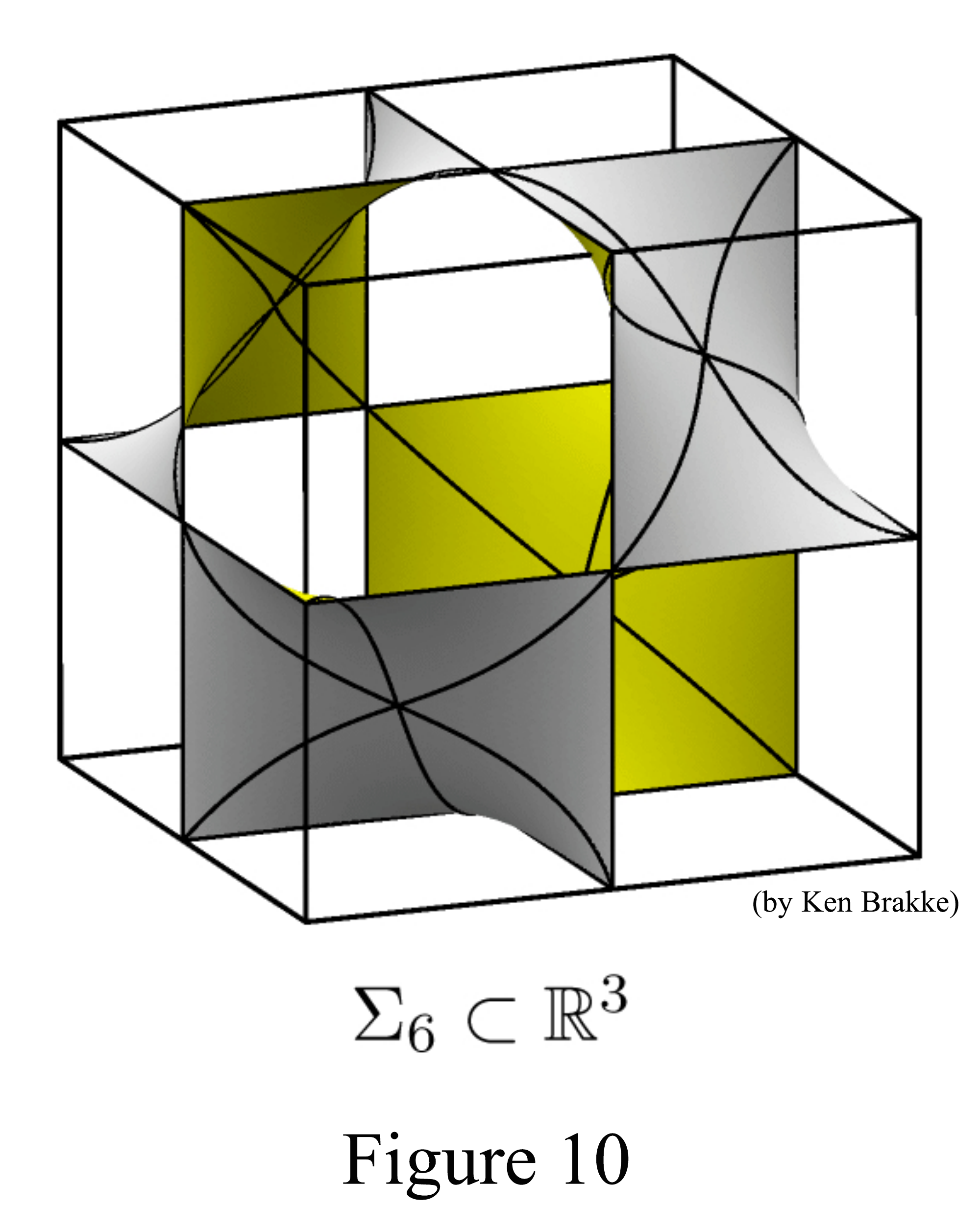}\\
\end{center}
Since $\Gamma_2=\partial F^0$ and $F^0\subset\cup_i\,\{x_i=0\}$, $\partial\Sigma_6$ is a subset of $\cup_i\,\{x_i=0\}$. And since $\Gamma_2=\partial F^1$ and $F^1\subset\partial(2\tilde{Q}^n)$, $\partial\Sigma_6$ lies on the boundary of $2\tilde{Q}^n$.
Therefore
\begin{equation}\label{line}
\partial\Sigma_6=\partial(2\tilde{Q}^n)\cap\bigcup_{i=1}^n\,\{x_i=0\}.
\end{equation}
Let $q_i^+,q_i^-$ be the points on the $x_i$-axis whose $x_i$-coordinates equal $1,-1$, respectively. Then $q_i^+,q_i^-\in\Gamma_2$ and $\Gamma_2\subset F_i^1=\{x_i=1\}\cap\partial\tilde{Q}^n$ in a neighborhood of $q_i^+$ for all $i=1,\ldots,n$. Hence $\Sigma_5$ is tangent to the face $F_i^1$ of $\tilde{Q}^n$ at $q_i^+$. It follows that $\Sigma_6$ is also tangent to the faces of $2\tilde{Q}^n$ at $q_1^+,\ldots,q_n^+$ and at $q_1^-,\ldots,q_n^-$.

In order to extend $\Sigma_6$ into a complete minimal hypersurface we need to understand the behavior of $\Sigma_6$ near the point $q_1^+=(1,0,\ldots,0)\in\Gamma_2$. In a neighborhood of $q_1^+$ $\Sigma_5$ is a graph over $V:=\{(1,x_2,\ldots,x_n):x_i\geq0, i=2,\ldots,n\}\subset\{x_1=1\}$. $\partial\Sigma_5$ contains all the $(n-2)$-planes $\{x_1=1\}\cap\{x_i=0\}\cap\partial V$ in a neighborhood of $q_1^+$. Hence by the $180^\circ$-rotations of $\Sigma_5$ around all these $(n-2)$-planes $\Sigma_5$ can be analytically extended to a minimal hypersurface $\Sigma_7$ which is a graph over $\{x_1=1\}$ in the same neighborhood. For $i=2,\ldots,n$, let $\rho_i$ be the rotation of $\mathbb R^n$ about the $(n-2)$-plane $\{x_1=1\}\cap\{x_i=0\}$ and let $\lambda_i$ be the reflection in $\mathbb R^n$ across the $(n-1)$-plane $\{x_i=0\}$. Since
$$\rho_2(x_1,\ldots,x_n)=(2-x_1,-x_2,x_3,\ldots,x_n),$$
one gets
$$\rho_i\circ\rho_j=\lambda_i\circ\lambda_j$$
and hence
$$\rho_i\circ\rho_j\left(\frac{2\tilde{Q}^n}{2}\right)=\frac{2\tilde{Q}^n}{2},\,\,\,\rho_i\circ\rho_j(O)=
O,\,\,\,\rho_i\circ\rho_j(\Sigma_6)=\Sigma_6.$$ It follows that
\begin{equation}\label{67}
\Sigma_6=\Sigma_7\,\,\,{\rm in}\,\,\,{\rm a}\,\,\,{\rm neighborhood}\,\,\,{\rm of}\,\,\,q_1^+.
\end{equation}

Remember that each subcube $Q$ of $\frac{2\tilde{Q}^n}{2}$ in the checkerboard has a unique vertex $p_Q$ antipodal to $O$ and contains a unique minimal equator $\Sigma_{\{O,\,p_Q\}}$. Let $\mathcal{L}=\bigcup_{Q\subset\frac{2\tilde{Q}^n}{2}}\{p_Q\}$. $\mathcal{L}$ forms an alternating subset in the set of $2^n$ vertices of $2\tilde{Q}^n$. Clearly $\mathcal{L}$ consists of $2^{n-1}$ vertices and completely determines $\Sigma_6$ in the sense that $\Sigma_6=\cup_{q\in\mathcal{L}}\Sigma_{\{O,\,q\}}$. Consider $\mathcal{L}\cap\{x_1=-1\}$ which consists of $2^{n-2}$ vertices of $2\tilde{Q}$. Choose any $q\in\mathcal{L}\cap\{x_1=-1\}$ and let $\tau$ be the parallel translation of $\mathbb R^n$ by $2$ in the direction of $x_1$-axis. Then
$$\tau(O)=(2,0,\ldots,0),\,\,\,\tau(q)\in\{x_1=1\},\,\,\,\tau(q)\notin\mathcal{L}.$$
However, there exists $\bar{q}\in\mathcal{L}$ such that $\tau(q)=\rho_i(\bar{q})$ for some $i=2,\ldots,n$. Moreover,
$\tau(O)=\rho_i(O).$
Therefore we have
$$\tau(\Sigma_{\{O,\,q\}})=\rho_i(\Sigma_{\{O,\,\bar{q}\}}).$$
Since $\Sigma_{\{O,\,\bar{q}\}}\subset \Sigma_6$ and $\rho_i(\Sigma_6)=\Sigma_7$ in a neighborhood of $q_1^+$, it follows that
\begin{equation}\label{1267}
\tau(\Sigma_6)=\Sigma_7\,\,\,{\rm in}\,\,\,{\rm a}\,\,\,{\rm neighborhood}\,\,\,{\rm of}\,\,\,q_1^+.
\end{equation}

Viewing $\Sigma_7$ as a minimal graph over $\{x_1=1\}$ in a neighborhood of $q_1$,  we see that the sign of $\Sigma_7$ is alternating on the components of $\{x_1=1\}\setminus\cup_{i=2}^n\{x_i=0\}$. In a neighborhood of $q_1^+$ $\Sigma_6$ constitutes the part where $\Sigma_7$ is negative and $\tau(\Sigma_6)$ positive.

The point $q_1^+$ is the center of the face $\{x_1=1\}$ of $2\tilde{Q}^n$. Again by the invariance of $\Sigma_5$ under the permutations of $\{x_1,\ldots,x_n\}$ the property of $\Sigma_6$ around $q_1^+$ as in \eqref{67} and \eqref{1267} should also hold around the center $q_i^+$ of every face $\{x_i=1\}$ of $2\tilde{Q}^n$. Hence we can extend $\Sigma_6$ into the complete embedded minimal hypersurface $\Sigma_D$ by periodically translating $\Sigma_6$ (with period of 2) in every direction of the coordinate axes of $\mathbb R^n$:
$$\Sigma_D=\bigcup_{k_1,\ldots,k_n:\,{\rm integers}}\tau_{2k_1,\ldots,2k_n}(\Sigma_6),$$
where $\tau_{2k_1,\ldots,2k_n}:\mathbb R^n\rightarrow\mathbb R^n$ is the parallel translation defined by
$$\tau_{2k_1,\ldots,2k_n}(x_1,\ldots,x_n)=(x_1+2k_1,\ldots,x_n+2k_n).$$

By the removable singularity theorem \cite{HL} $\Sigma_D$ is analytic everywhere. Finally \eqref{line} implies that $\Sigma_D$ contains $n-1$ $(n-2)$-planes at $q_i^\pm$ with odd integer coordinates.
\end{proof}

\section{Scherk's second surface}
Scherk's minimal surfaces were found in 1834. After the catenoid(1744) and helicoid(1774), they were the third example(s) of minimal surfaces. Scherk used the method of separation of variables to find the equations
$$z=\log\cos x - \log\cos y \,\,\,\, {\rm and} \,\,\,\, \sin z=\sinh x\cdot \sinh y$$
for the first surface and the second surface, respectively. Scherk's first surface is doubly periodic and the second surface singly periodic. It turns out that these two are conjugate minimal surfaces. The second surface is asymptotic to two orthogonal planes. In fact, H. Karcher \cite{K} has found that there exist minimal saddle towers in $\mathbb R^3$ which are asymptotic to $k$ planes intersecting each other along a line at equal angles of $\pi/k$ for any integer $k\geq2$. In this section we construct the higher dimensional generalizations of Scherk's second surface and the saddle towers. These hypersurfaces are asymptotic to $k$ hyperplanes meeting each other at $\pi/k$-angles for any integer $k\geq2$. The key idea of our method is to use the catenoid as a barrier in the Dirichlet problem. It should be mentioned that F. Pacard \cite{P} has constructed similar hypersurfaces for $k=2$ using a different method (desingularization procedure).

\begin{theorem}
For any integer $k\geq2$ there exists an embedded minimal hypersurface $\Sigma_S$ in $\mathbb R^n$ satisfying the following properties{\rm :}

{\rm a)} $\Sigma_S$ is asymptotic to $k$  hyperplanes $\Pi_1,\ldots,\Pi_k$ meeting each other along the $(n-2)$-plane $P^{n-2}:=\{x_1=0,\,x_n=0\}$ at equal angles of $\pi/k$.

{\rm b)} $\Sigma_S$ is periodic in $n-2$ pairwise orthogonal directions of $P^{n-2}$.

{\rm c)} Given any positive real numbers $a_2,\ldots,a_{n-1}$, consider the union $\mathcal{P}^{n-3}$ of all the $(n-3)$-planes $\{x_1=0,x_n=0,x_2=ma_2\},\{x_1=0,x_n=0,x_3=ma_3\},\ldots,\{x_1=0,x_n=0,x_{n-1}=ma_{n-1}\}$ in $P^{n-2}$ for every integer $m$. $\mathcal{P}^{n-3}$ divides $P^{n-2}$ into $(n-2)$-dimensional rectangular cubes which are all congruent to $(0,a_2)\times(0,a_3)\times\cdots\times(0,a_{n-1})$. Let $\ell_1,\ldots,\ell_k$ be the lines in the $x_1x_n$-plane which are contained in $\Pi_1,\ldots,\Pi_k$, respectively, so that $\Pi_i=P^{n-2}\times\ell_i$, $i=1,\ldots,k$. Then $\Sigma_S$ contains $\mathcal{P}^{n-3}\times\ell_i$ for all $i=1,\ldots,k$.
\end{theorem}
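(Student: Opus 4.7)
The plan is to build a fundamental piece $\Sigma_S^0$ by solving a Plateau-type problem on a prismatic domain, using a higher-dimensional catenoid as a barrier, and then to extend $\Sigma_S^0$ to a complete embedded minimal hypersurface by iterated $180^\circ$-rotations about the totally geodesic $(n-2)$-planes sitting in $\partial\Sigma_S^0$.

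\emph{Step 1: Fundamental piece.} After a rotation of the $x_1 x_n$-plane I may assume $\ell_1$ is the positive $x_1$-axis and $\ell_2$ the ray at angle $\pi/k$ from $\ell_1$. Let $W$ be the open sector between $\ell_1^+$ and $\ell_2^+$ in the $x_1 x_n$-plane, $R=(0,a_2)\times\cdots\times(0,a_{n-1})\subset P^{n-2}$, and $\Omega=R\times W$. The lateral $(n-2)$-faces of $\Omega$ split into ``rungs'' $\partial R\times(\ell_1^+\cup\ell_2^+)$, which $\Sigma_S^0$ must contain in its boundary, and ``walls'' $\overline R\times(\ell_1^+\cup\ell_2^+)\subset \Pi_1\cup\Pi_2$, which are only to be approached asymptotically. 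For each $N\gg 1$ I would solve a Jenkins-Serrin-type Dirichlet problem (cf.\ \cite{JS}) on $\overline{\Omega}\cap\{r\le N\}$ (with $r=\sqrt{x_1^2+x_n^2}$) for a minimal hypersurface $\Sigma_N^0$ whose boundary is the truncated rungs together with an appropriate symmetric capping contour on $\overline R\times(\overline W\cap\{r=N\})$. The boundary data is invariant under the involution $\theta\mapsto\pi/k-\theta$, the reflections $x_j\mapsto a_j-x_j$, and the permutations of $\{x_2,\ldots,x_{n-1}\}$ preserving $R$, and uniqueness in the graph class forces $\Sigma_N^0$ to inherit these symmetries. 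A suitable higher-dimensional catenoid (for instance the product of a classical $2$-dimensional catenoid with an $(n-3)$-dimensional Euclidean factor, aligned so that its axis is $P^{n-2}$) placed as a two-sided barrier keeps $\Sigma_N^0$ uniformly away both from the collapsed limit $\overline R\times P^{n-2}$ and from $\Pi_1\cup\Pi_2$, and delivers $C^1$ estimates sufficient to extract a subsequential limit $\Sigma_S^0=\lim_{N\to\infty}\Sigma_N^0$.

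\emph{Step 2: Extension.} Each rung lies in a totally geodesic $(n-2)$-plane, so $\Sigma_S^0$ meets it orthogonally by the boundary maximum principle, and the $180^\circ$-rotation about it extends $\Sigma_S^0$ analytically across the rung (Theorem 1.4 of \cite{HL}). Let $G^*$ denote the group of Euclidean motions generated by these rotations. Composing rotations about two rungs $F\times\ell_1^+$ and $F\times\ell_2^+$ sharing a face $F$ of $\partial R$ gives a rotation by $2\pi/k$ about $P^{n-2}$, and composing rotations about $(\{x_j=0\}\cap\partial R)\times\ell_i^+$ and $(\{x_j=a_j\}\cap\partial R)\times\ell_i^+$ gives the translation $x_j\mapsto x_j+2a_j$. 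Hence $G^*$ contains both the $k$-fold rotation group about $P^{n-2}$ and a full translation lattice in $P^{n-2}$, and $\Sigma_S:=\bigcup_{g\in G^*}g(\Sigma_S^0)$ is the desired complete, embedded, analytic minimal hypersurface.

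\emph{Step 3: Verification, and the main obstacle.} Property (b) is immediate from the translation subgroup of $G^*$. Property (c) follows because the $G^*$-orbit of the two initial rungs $\partial R\times\ell_1^+$ and $\partial R\times\ell_2^+$ already covers every $(n-2)$-plane in $\mathcal P^{n-3}\times\ell_i$ for $i=1,\ldots,k$ (the dihedral group $D_k$ generated by the reflections in $\ell_1$ and $\ell_2$ sweeps $\ell_1^+\cup\ell_2^+$ over all $2k$ rays of $\ell_1\cup\cdots\cup\ell_k$, while the translation lattice sweeps $\partial R$ over all of $\mathcal P^{n-3}$). Property (a) follows from the catenoid estimates of Step~1, which force the angular position of $\Sigma_S^0$ at large $r$ to concentrate on $\ell_1^+\cup\ell_2^+$, so that applying $G^*$ yields a surface asymptotic to $\Pi_1\cup\cdots\cup\Pi_k$. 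The hard part will be Step~1: without the catenoid barrier the minimizing sequence could collapse onto $\overline R\times P^{n-2}$ or attach to $\Pi_1\cup\Pi_2$, and producing uniform estimates in the unbounded $r$-direction while preserving the rungs in the boundary is the technical core of the proof.
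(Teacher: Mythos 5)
Your geometric picture is the right one: produce a fundamental piece bounded by ``rungs'' $\partial R\times(\ell_1^+\cup\ell_2^+)$, use a catenoid as a barrier, and then propagate by $180^\circ$-rotations about the totally geodesic $(n-2)$-planes. The extension step (your Step 2) matches what the paper does via its pie-shaped checkerboard tessellation. But there are two genuine gaps in Step 1, and together they are exactly the ``technical core'' you flag as unresolved.

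First, the paper does not solve a Plateau-type problem in the $n$-dimensional wedge $\Omega\cap\{r\le N\}$ with a capping contour; it solves a Jenkins--Serrin \emph{graph} problem over the $(n-1)$-dimensional box $Q_b^{n-1}=[-b,b]\times[0,a_2]\times\cdots\times[0,a_{n-1}]\subset\{x_n=0\}$, with the piecewise-linear boundary data $h_k=c_k|x_1|$ on all of $\partial Q_b^{n-1}$ (so the ``cap'' over $\{x_1=\pm b\}$ is automatic and canonical). In the graph formulation regularity is free, the family $\tilde h_{k,b}$ is automatically monotone increasing in $b$, and the question of the surface collapsing onto $\overline R\times P^{n-2}$ or sticking to $\Pi_1\cup\Pi_2$ simply does not arise (since $\tilde h_{k,b}\ge h_k$ by the maximum principle, the graph stays in the pie-shaped region $V$). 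Your Plateau formulation re-introduces all the degeneration and regularity issues that the graph setting avoids, and you do not resolve them.

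Second, and more importantly, the barrier you propose does not work. You suggest ``the product of a classical $2$-dimensional catenoid with an $(n-3)$-dimensional Euclidean factor.'' That hypersurface is indeed minimal, but it does \emph{not} lie in a slab: the $2$-dimensional catenoid is unbounded in its axis direction. The paper instead uses the genuine $(n-1)$-dimensional rotational catenoid $\mathcal C^{n-1}$ in $\mathbb R^n$ (generating curve $x_n=f(x_1)$, rotated by $SO(n-1)$ about the $x_1$-axis, satisfying $ff''=(n-2)(1+(f')^2)$). For $n\ge4$ this catenoid is confined to a slab $\{-1<x_1<1\}$ and is asymptotic to its boundary hyperplanes, so the graph $g$ of the upper half satisfies $g\to\infty$ as $x_1\to\pm1$. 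That blow-up is precisely what excludes the possibility of first contact between $\tilde h_{k,b}$ and $g+c_k$ on $\{x_1=\pm1\}$; together with $g>0$ on $\partial Q^{n-1}_\infty$ (where $\tilde h_{k,b}=h_k\le c_k$) it forces any contact point to be interior, contradicting the maximum principle, and yields the a priori bound $\tilde h_{k,b}<g+c_k$ on $Q_1^{n-1}$. Your product catenoid has no such slab wall, so the barrier argument breaks down exactly where it is needed. (The requirement $a_i<a/\sqrt{n-2}$, which ensures $Q_1^{n-1}$ sits inside the catenoid's neck cylinder, is the reason for the ``after a homothety'' normalization; your sketch omits this.) Once the correct barrier and the graph formulation are in place, the linear supersolution $(a+c_k)+c_k|x_1|$ gives existence of $\tilde h_k$ on all of $Q^{n-1}_\infty$, and the rest of your Steps 2--3 goes through as in the paper.
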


To prove this theorem we need to introduce the higher-dimensional catenoid $\mathcal{C}^{n-1}\subset\mathbb R^n$. $\mathcal{C}^{n-1}$ is obtained by rotating a generating curve $C:\,x_n=f(x_1)$ of the $x_1x_n$-plane through the $SO(n-1)$ action on the $x_2\cdots x_n$-plane. The resulting hypersurface has zero mean curvature if  and only if
$$x_nx_n''-(n-2)\{1+(x_n')^2\}=0.$$
It is interesting to note that $\mathcal{C}^{n-1}$ lies in a slab of $\mathbb R^n$ if $n\geq4$.
Since the minimality of a hypersurface is invariant under homothety, we can assume that $\mathcal{C}^{n-1}$ lies in the slab $\{-1< x_1<1\}$ and is asymptotic to the boundaries of the slab. Let's define the upper half catenoid
$$\frac{1}{2}\mathcal{C}^{n-1}=\mathcal{C}^{n-1}\cap\{x_n\geq0\}.$$
$\frac{1}{2}\mathcal{C}^{n-1}$ is the graph of a  nonnegative function $x_n=g(x_1,\ldots,x_{n-1})$. Since one can find $a>0$ such that $f(x_1)\geq a$ for $-1<x_1<1$ and $f(0)=a$, the domain of definition of $g$ contains the solid cylinder $D^{n-1}:=\{-1<x_1<1, \,x_2^2+\cdots+x_{n-1}^2< a^2,\,x_n=0\}$. We are going to use $\frac{1}{2}\mathcal{C}^{n-1}$ as a barrier in the proof of the theorem.

\begin{proof}
By the invariance of minimality of $\Sigma_S$ under homothety we may assume
\begin{equation}\label{ai}
a_i<\frac{a}{\sqrt{n-2}}\,\,\,\, {\rm for\, all}\,\,\,i=2,\ldots,n-1.
\end{equation}
Let $Q_b^{n-1}=[-b,b]\times[0,a_2]\times\cdots\times[0,a_{n-1}]$ be a closed cube in the horizontal hyperplane $\{x_n=0\}$. Then by \eqref{ai} we have $Q_b^{n-1}\cap\{-1<x_1<1\}\subset D^{n-1}$. For any integer $k\geq2$, define a function on the infinite cube $Q_\infty^{n-1}$
\begin{equation}\label{hk}
h_k(x_1,\ldots,x_{n-1})=c_k|x_1|,
\end{equation}
where $c_k>0$ is to be determined.

The graph of $x_n=h_k(x_1,\ldots,x_{n-1})$ over $Q_b^{n-1}$ is piecewise planar (V-shaped) with angle $\theta_k$ along the sharp edge over $\{0\}\times[0,a_2]\times\cdots\times[0,a_{n-1}]$. Determine $c_k$ in such a way that $\theta_k=\pi/k$. We want to replace ${\rm graph}(h_k)$ with a minimal hypersurface by finding a function $\tilde{h}_{k,b}$ on $Q_b^{n-1}$ such that $\tilde{h}_{k,b}=h_k$ on $\partial Q_b^{n-1}$ and the graph of $x_n=\tilde{h}_{k,b}(x_1,\ldots,x_{n-1})$ is  minimal. By Jenkins-Serrin \cite{JS} such a $\tilde{h}_{k,b}$  exists.
From \eqref{hk} we see that $h_{k}\leq c_k$ on $Q_1^{n-1}$. Hence
\begin{equation}\label{hkb}
\tilde{h}_{k,1}\leq c_k\,\,\,{\rm on}\,\,\,Q_1^{n-1}.
\end{equation}
 Clearly we have
$$\tilde{h}_{k,b_1}<\tilde{h}_{k,b_2}\,\,\,\,{\rm on}\,\,\,Q_{b_1}^{n-1}\,\,\,{\rm if}\,\,\,b_1<b_2.$$
When $b$  increases, we need to show that $\tilde{h}_{k,b}$ cannot become much bigger than $g$ on $Q_1^{n-1}$.
Suppose $\tilde{h}_{k,1}=g+c_k$ at a point $p_1$ of $Q_1^{n-1}$. Since $g>0$ on $Q_1^{n-1}$, from \eqref{hkb} we see that $p_1$ cannot be a boundary point of $Q_\infty^{n-1}$. $p_1$ cannot be a boundary point of the slab $\{-1<x_1<1\}$ either, because $g=\infty$ there. Hence $p_1$ must be an interior point of $Q_1^{n-1}$. Then there should exist an interior point  $p_2$ of $Q_1^{n-1}$ and $c>0$ such that $\tilde{h}_{k,1}\leq g+c_k+c$ on $Q_1^{n-1}$ and equality holds at $p_2$. But this contradicts the maximum principle. Hence
$$\tilde{h}_{k,b}<g+c_k\,\,\,\,{\rm on}\,\,\,Q_1^{n-1}\,\,\,{\rm for}\,\,\,{\rm any}\,\,\,b\geq1.$$

\begin{center}
\includegraphics[width=2.7in]{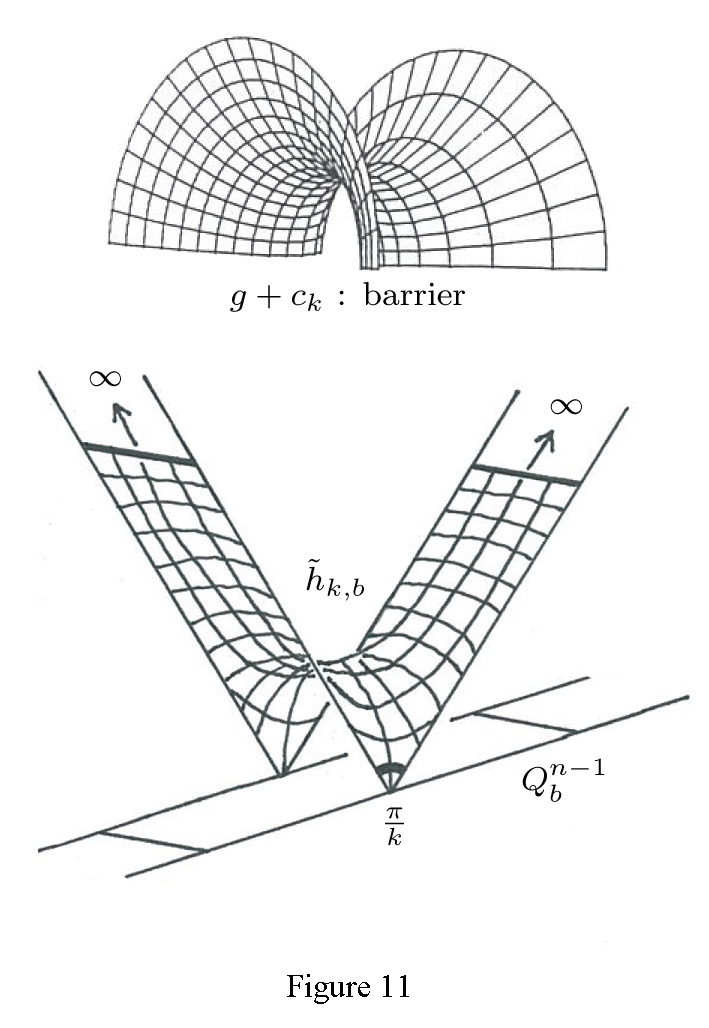}\\
\end{center}
Therefore the limit $\tilde{h}_k$ of $\tilde{h}_{k,b}$ as $b\rightarrow\infty$ exists on $Q_1^{n-1}$ (see Figure 11) and
$$\tilde{h}_k\leq g+c_k\,\,\,\,{\rm on}\,\,\,Q_1^{n-1}.$$

\begin{center}
\includegraphics[width=2.3in]{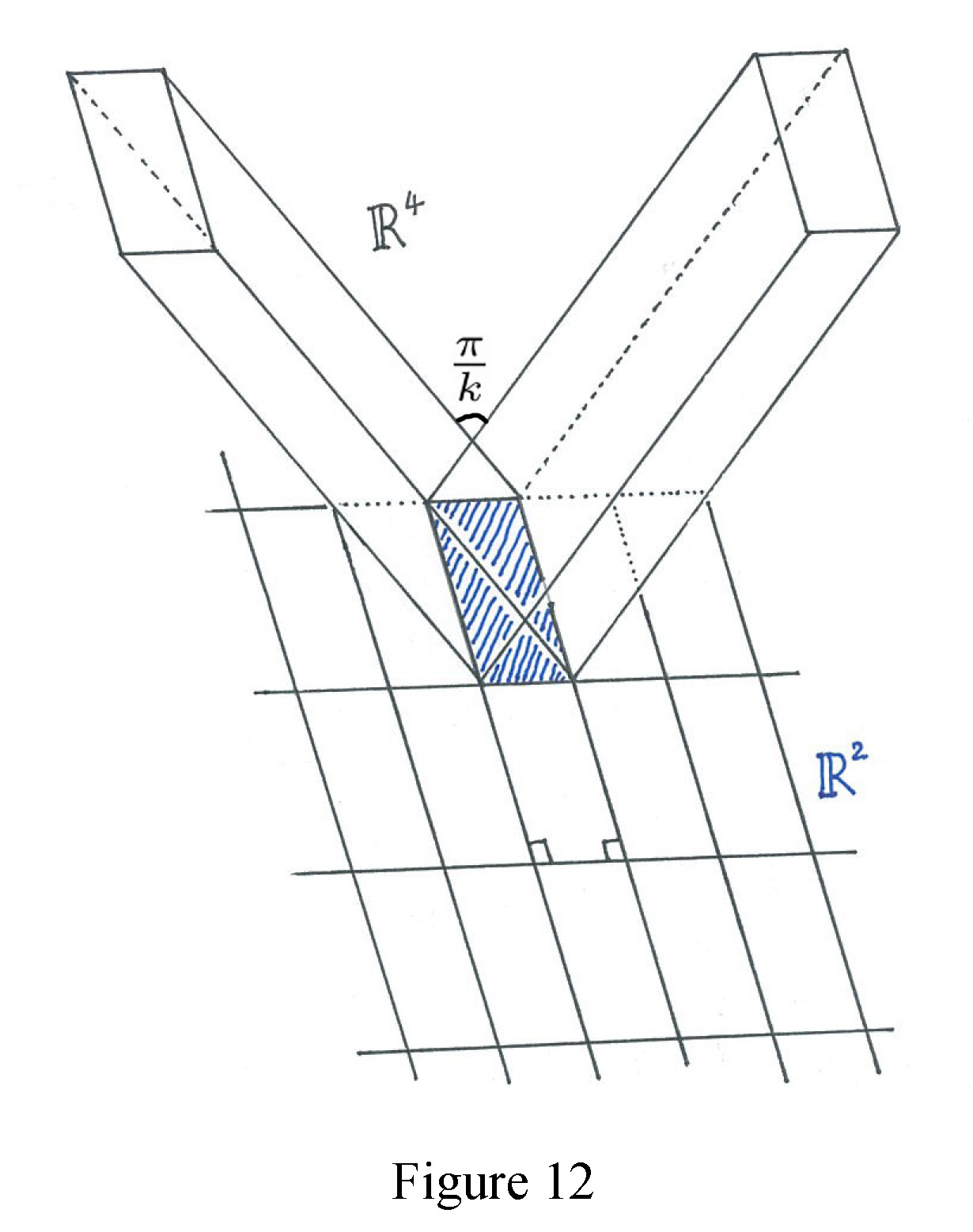}\\
\end{center}

We claim that $\tilde{h}_k$ exists on the infinite cube $Q_\infty^{n-1}$ as well. Note that
$$g\leq a\,\,\,\,{\rm on}\,\,\,\{0\}\times[0,a_2]\times\cdots\times[0,a_{n-1}].$$
Hence
$$\tilde{h}_{k,b}(x_1,\ldots,x_{n-1})\leq (a+c_k)+c_k|x_1|$$
on the boundaries of $[0,b]\times[0,a_2]\times\cdots\times[0,a_{n-1}]$ and $[-b,0]\times[0,a_2]\times\cdots\times[0,a_{n-1}]$ for any $b>0$.
Thus
$$\tilde{h}_{k,b}(x_1,\ldots,x_{n-1})\leq (a+c_k)+c_k|x_1|\,\,\,\,{\rm on}\,\,\,Q_b^{n-1}$$
for any $b$ and so $\tilde{h}_k$ exists on $Q_\infty^{n-1}$, as claimed. Clearly $\tilde{h}_k$ is analytic.

\begin{center}
\includegraphics[width=2in]{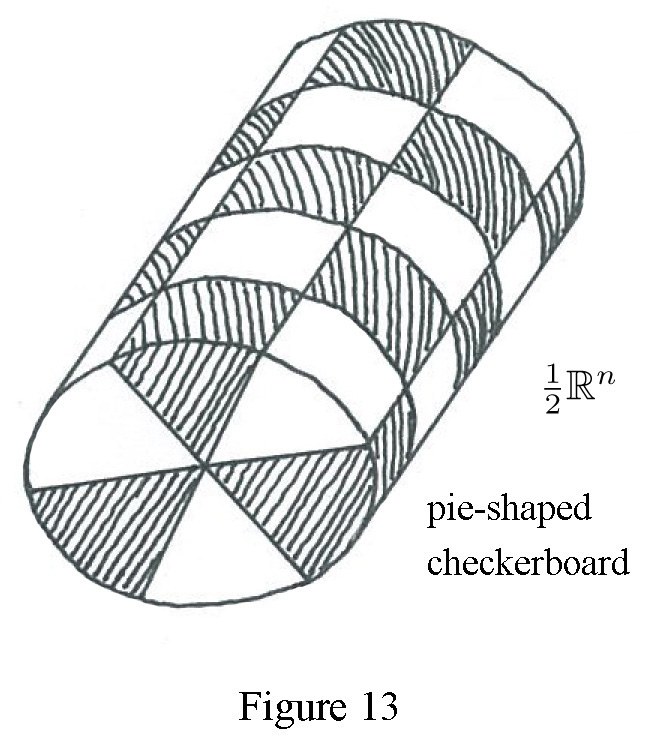}\\
\end{center}

Let $\Sigma_8$ be the graph of $x_n=\tilde{h}_{k}(x_1,\ldots,x_{n-1})$ on $Q_\infty^{n-1}$. $\Sigma_8$ inherits all the symmetries of $Q_\infty^{n-1}$, that is, $\Sigma_8$ is symmetric with respect to the $n-1$ vertical pairwise orthogonal hyperplanes of $\mathbb R^n$ which divide each interval of $Q_b^{n-1}$ into halves. Given an $(n-2)$-dimensional rectangular cube $Q^{n-2}$, let's call $Q^{n-2}\times\mathbb R^2$ an $(n-2)$-{\it slab} in $\mathbb R^n$. So a slab of $\mathbb R^3$ is called a 1-slab. The graph of the piecewise-linear function $x_n=h_k(x_1,\ldots,x_{n-1})$ divides the $(n-2)$-slab $[0,a_2]\times\cdots\times[0,a_{n-1}]\times (x_1x_n$-plane) into two components. The smaller one is (infinite) pie-shaped; let's denote it as $V$. As the two planar boundaries of $V$ in the interior of the $(n-2)$-slab make an angle of $\pi/k$, the $(n-2)$-slab $[0,a_2]\times\cdots\times[0,a_{n-1}]\times (x_1x_n$-plane) can be divided into $2k$ pie-shaped domains congruent to $V$. $\mathbb R^n$ can be divided into a tessellation $\mathcal{T}_0$ by $(n-2)$-slabs which are all congruent to $[0,a_2]\times\cdots\times[0,a_{n-1}]\times (x_1x_n$-plane) and one of which is $[0,a_2]\times\cdots\times[0,a_{n-1}]\times (x_1x_n$-plane) itself. One can refine $\mathcal{T}_0$ into another tessellation $\mathcal{T}_1$ by dividing each $(n-2)$-slab of $\mathcal{T}_0$ into $2k$ pie-shaped domains congruent to $V$. Let $\frac{1}{2}\mathbb R^n$ denote the union of all the pie-shaped domains in $\mathcal{T}_1$ which are chosen alternatingly such that $V\subset\frac{1}{2}\mathbb R^n$ (see Figure 13). $\frac{1}{2}\mathbb R^n$ is called the {\it pie-shaped checkerboard}.

$\Sigma_8$ is an embedded minimal hypersurface in $V$ so that $\partial\Sigma_8$ is a subset of the $(n-2)$-skeleton of $V$. Each pie-shaped domain $V_0$ of $\frac{1}{2}\mathbb R^n$ contains a unique minimal hypersurface $\Sigma_{V_0}$ which is congruent to $\Sigma_8$ and whose boundary is a subset of the $(n-2)$-skeleton of $V_0$. Define
$$\Sigma_S=\bigcup_{V_0\subset\frac{1}{2}\mathbb R^n}\Sigma_{V_0}.$$
Let $V_1,V_2$ be two neighboring pie-shaped domains of $\frac{1}{2}\mathbb R^n$ which share a nonempty subset $K$ of their $(n-2)$-skeletons. Then $\Sigma_{V_1}$ is the $180^\circ$-rotation of $\Sigma_{V_2}$ around $K$ because of the symmetries of $\Sigma_8$. Therefore $\Sigma_S$ is a complete embedded analytic minimal hypersurface as described by a), b), c).
\end{proof}

{\bf Remark 3.} The catenoid can be used as a barrier to construct even Scherk's second surface and the saddle towers in $\mathbb R^3$ without appealing to the Weierstrass representation formula. Moreover, H. Karcher's helicoidal saddle towers \cite{K} can be constructed in this way: Deform $V$ into $V_0$ which is invariant under a screw motion rotating around the $x_2$-axis and tessellate $\mathbb R^3$ by the domains congruent to $V_0$; use the half catenoid as a barrier to construct a minimal surface $\Sigma_{V_0}$ whose boundary is a subset of the 1-skeleton of $V_0$; keep rotating $\Sigma_{V_0}$ around its boundaries by $180$ degrees.

{\bf Remark 4.} Higher dimensional Scherk's first surface $\Sigma_{S_1}$ can be also constructed in $\mathbb R^n$ by solving the Dirichlet problem on the domain $O\cone F$, where $F$ is a face of the cube $[-1,1]^{n-1}\subset\mathbb R^{n-1}$. But $\Sigma_{S_1}$ has a self-intersection in case $n\geq4$ because the tessellation of $\mathbb R^{n-1}$ by the domains congruent to $O\cone F$ cannot generate the pyramid-shaped checkerboard $\frac{1}{2}\mathbb R^{n-1}$.

\end{document}